\newcommand{\R}{\mathds R}
\newcommand{\C}{\mathds C}
\newcommand{\Or}{\mathcal O}
\newcommand{\eps}{\varepsilon}
\newtheoremstyle{plainNoItalics}{}{}{\normalfont}{}{\bfseries}{.}{ }{}
\theoremstyle{plain}
\newtheorem{theorem}{Theorem}[section]
\newtheorem{proposition}[theorem]{Proposition}
\newtheorem{corollary}[theorem]{Corollary}
\newtheorem{lemma}[theorem]{Lemma}
\theoremstyle{plainNoItalics}
\newtheorem{remark}[theorem]{Remark}
\newtheorem*{theorem*}{Theorem}
\newtheorem*{proposition*}{Proposition}
\newtheorem*{lemma*}{Lemma}
\newtheorem*{corollary*}{Corollary}
\newtheorem*{remark*}{Remark}
\newtheorem*{observation*}{Observation}
\newtheorem*{example*}{Example}
\newtheorem*{examples*}{Examples}
\newtheorem*{assumption*}{Assumption}
\theoremstyle{definition}
\newtheorem{definition}[theorem]{Definition}
\newtheorem*{definition*}{'Definition'}
\newtheorem*{definitionu*}{Definition}
\title{Approximation of conformal mappings using conformally equivalent 
triangular lattices}
\author{Ulrike B\"ucking}
\begin{document}

\maketitle

\begin{abstract}
Two triangle meshes are conformally equivalent if their edge lengths are related 
by scale factors associated to the vertices. Such a pair can be considered as 
preimage and image of a discrete conformal map.
In this article we study the approximation of a
given smooth conformal map $f$ by such discrete conformal maps $f^\eps$ 
defined on triangular lattices. In particular, 
let $T$ be an infinite triangulation of the plane with 
congruent strictly acute triangles. We scale this 
triangular lattice by $\eps>0$ and approximate a compact subset 
of the domain of $f$ with a portion of it. For $\eps$ small enough we prove 
that there exists a conformally equivalent triangle mesh whose scale factors 
are given by $\log|f'|$ on the boundary. Furthermore we show that the 
corresponding discrete conformal (piecewise linear) maps $f^\eps$ converge to 
$f$ uniformly in $C^1$ with error of order $\eps$.
\end{abstract}

\section{Introduction}
Holomorphic functions build the basis and heart of the rich theory of complex 
analysis. Holomorphic functions with nowhere vanishing derivative, also called 
{\em conformal maps}, have the property to preserve angles. Thus they may be 
characterized by the fact that they are infinitesimal scale-rotations.

In the discrete theory, the idea of characterizing conformal maps as local
scale-rotations may be translated into different concepts. Here we consider the 
discretization coming from a metric viewpoint: Infinitesimally, lengths are 
scaled by a factor, i.e.\ by $|f'(z)|$ for a conformal function $f$ on 
$D\subset\C$. More generally, on a smooth manifold two Riemannian metrics $g$ 
and $\tilde{g}$ are conformally equivalent if $\tilde{g}=\text{e}^{2u}g$ for 
some smooth function $u$.

The smooth complex domain (or manifold) is replaced in this discrete 
setting by 
a triangulation of a connected subset of the plane $\C$ (or a triangulated 
piecewise Euclidean manifold). 

\subsection{Convergence for discrete conformal PL-maps on triangular lattices}
In this article we focus on the case where the triangulation is a (part of a) 
{\em triangular lattice}. In particular, let $T$ be a lattice triangulation of 
the whole complex plane $\C$ with congruent triangles, see 
Figure~\ref{FigTiling}. 
\begin{figure}[t]
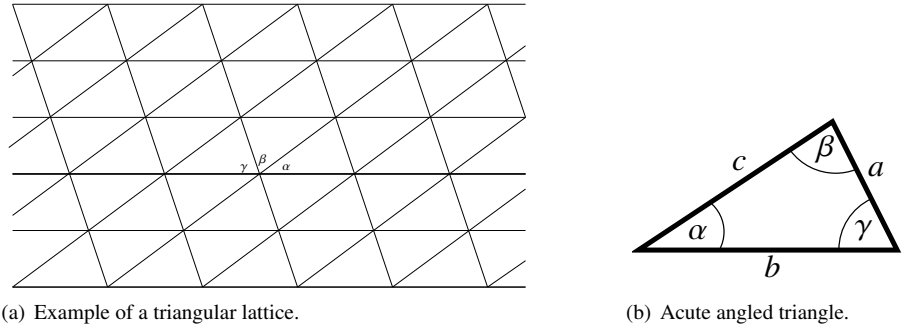

\subfigure[Example of a triangular lattice.]{\label{FigTiling}
\scalebox{0.25}{\input{Triangulation.pspdftex}}
}
\hspace{3em}
\subfigure[Acute angled triangle.]{\label{figTriang}
 \input{TriangleABC.pspdftex}
}
\caption{Lattice triangulation of the plane with congruent 
triangles.}\label{FigRegTile}
\end{figure}
The sets of vertices and edges of $T$ are denoted by $V$ and $E$ respectively.
Edges will often be written as $e=[v_i,v_j]\in E$, where $v_i,v_j\in V$ are 
its incident vertices. For triangular faces we use the notation 
$\Delta[v_i,v_j,v_k]$ enumerating the incident vertices with respect to the
orientation (counterclockwise) of $\C$.

On a subcomplex of $T$ we now define a 
discrete conformal mapping. The main idea is to change the lengths of the edges 
of the triangulation according to scale factors at the vertices. 
The new triangles are then ``glued together to result in a piecewise linear 
map, see Figure~\ref{FigExMap} for an illustration. More precisely, we have

\begin{definition}\label{defdcm}
 A {\em discrete conformal PL-mapping}\index{Discrete conformal mapping} $g$ is 
a continuous and orientation preserving map of a subcomplex $T_S$ of a 
triangular lattice $T$ to $\C$ which is locally a homeomorphism in a 
neighborhood of each interior point and whose restriction to every triangle
is a linear map onto the corresponding image triangle, that is the mapping is 
piecewise linear. Furthermore, there exists a function
$u:V_S\to\R$ on the vertices, 
called {\em associated scale factors}, such that for all edges $e=[v,w]\in E_S$ 
there holds
\begin{equation}\label{eqdefdiscf}
 |g(v)-g(w)|=|v-w|\text{e}^{(u(v)+u(w))/2},
\end{equation}
where $|a|$ denotes the modulus of $a\in\C$.
\end{definition}

Note that equation~\eqref{eqdefdiscf} expresses a linear relation for the 
logarithmic edge lengths, that is
\begin{equation*}\label{eqceqlog}
2\log |g(v)-g(w)| =2\log |v-w| +u(v)+u(w).
\end{equation*}

\begin{figure}[h]
\centering{
\includegraphics[width=0.3\textwidth, trim={0 2cm 0 2.7cm},clip]{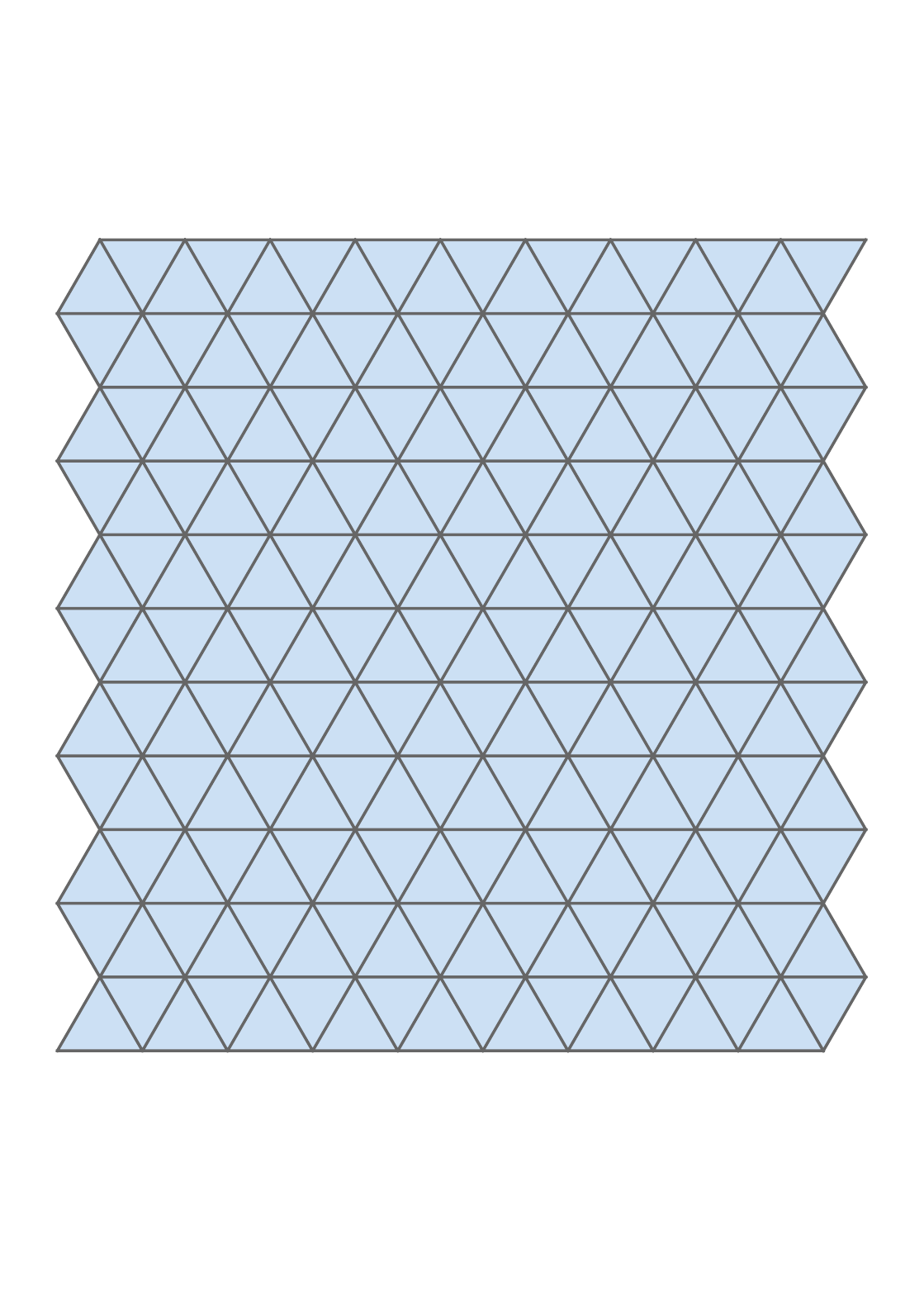} 
\begin{minipage}[b][0.18\textwidth][t]{0.2\textwidth}
 \[\overset{g}{\longrightarrow} \]
\end{minipage}
\includegraphics[width=0.3\textwidth, trim={0 1.7cm 0 1.7cm},clip]{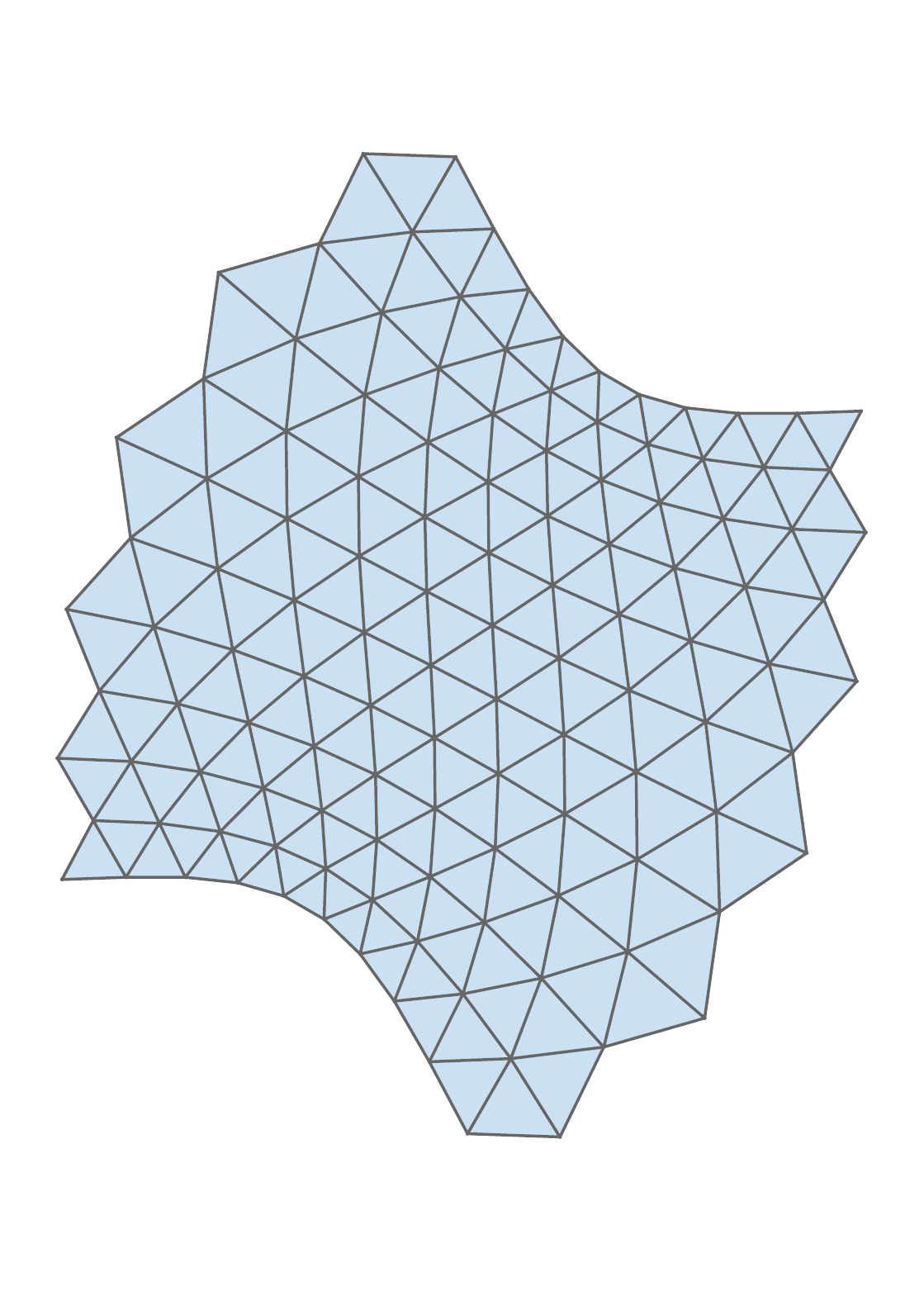}
}
\caption{Example of a discrete conformal PL-map $g$.}\label{FigExMap}
\end{figure}

In fact, the definition of a discrete conformal PL-map relies on the notion of 
discrete conformal triangle meshes. These have been studied by  Luo, Gu, Sun, 
Wu, Guo~\cite{Luo04,Luo13,Luo14}, Bobenko, Pinkall, 
and Springborn~\cite{Boris} and others.

As possible application,
discrete conformal PL-maps can be used for discrete uniformization. The 
simplest 
case is a discrete Riemann mapping theorem, i.e.\ the problem of finding a 
discrete conformal mapping of a simply connected domain onto the unit disc. 
Similarly, we may consider a related Dirichlet problem. Given some function 
$u_\partial$ on the boundary of a subcomplex $T_S$,
find a discrete conformal PL-map whose associated
scale factors agree on the boundary with $u_\partial$. For such a Dirichlet 
problem (with assumptions on $u_\partial$ and $T_S$) we will prove existence as 
part of our convergence theorem.

In this article we present a first answer to the following problem:
{\em Given a smooth conformal map, find a sequence of discrete 
conformal PL-maps which approximate the given map.}
We study this problem on triangular lattices $T$ with acute angles and always 
assume for simplicity that the origin is a vertex.
Denote by $\eps T$ the lattice $T$ scaled by $\eps>0$. 
Using the values of $\log |f'|$, we obtain a discrete conformal PL-map $f^\eps$ 
on a subcomplex of $\eps T$ from a boundary 
value problem for the associated scale factors. More precisely, we prove the 
following approximation result.

\begin{theorem}\label{theoConv}
Let $f:D\to\C$ be a conformal map (i.e.\ holomorphic with $f'\not=0$). Let 
$K\subset D$ be a compact set which is the closure of its simply connected 
interior $int(K)$ and assume that $0\in int(K)$. 
Let $T$ be a triangular lattice with strictly acute angles.
For each $\eps>0$ let $T^\eps_K$ 
be a subcomplex of $\eps T$ whose support is contained in $K$ and is 
homeomorphic to a closed disc. We further assume that $0$ is an 
interior vertex of $T^\eps_K$. Let $e_0=[0,{\mathbb v_{\mathbb 0}}]\in 
E^\eps_K$ be one 
of its incident edges.

Then if $\eps>0$ is small enough (depending on 
$K$, $f$, and $T$) there exists a unique discrete conformal PL-map $f^\eps$ on 
$T^\eps_K$ which satisfies the following two conditions:
\begin{itemize}
 \item The associated scale factors $u^\eps:V^\eps_K\to\R$ satisfy
\begin{equation}\label{eqboundu}
 u^\eps(v)=\log|f'(v)|\qquad \text{for all boundary vertices } v \text{ of } 
V^\eps_K.
\end{equation}
\item The discrete conformal PL-map is normalized according to $f^\eps(0)=f(0)$ 
and $\arg(f^\eps({\mathbb v_{\mathbb 0}})-f^\eps(0))= \arg({\mathbb v_{\mathbb 
0}})+ \arg(f'(\frac{{\mathbb v_{\mathbb 0}}}{2})) 
\pmod{2\pi}$.
\end{itemize}
Furthermore, the following estimates for $u^\eps$ and $f^\eps$ hold for all 
vertices $v\in V^\eps_K$ and points $x$ in the support of $T^\eps_K$ 
respectively with constants 
$C_1,C_2,C_3$ depending only on $K$, $f$, and $T$, but not on $v$ or $x$:
\begin{enumerate}[(i)]
 \item The scale factors $u^\eps$ approximate $\log |f'|$ uniformly with error 
of order~$\eps^2$:
\begin{equation}\label{eqconvu}
 \left|u^\eps(v)-\log|f'(v)|\right|\leq C_1\eps^2.
\end{equation}
\item The discrete conformal PL-mappings $f^\eps$ 
converge to $f$ for $\eps \to 0$ uniformly with error of order~$\eps$:
\begin{equation*}
 \left|f^\eps(x)-f(x)\right|\leq C_2\eps.
\end{equation*}
\item The derivatives of $f^\eps$ (in the interior of the triangles) converge 
to $f'$ uniformly for $\eps \to 0$ with error of order~$\eps$:
\begin{equation*}
 \left|\partial_z f^\eps(x)-f'(x)\right|\leq C_3\eps\qquad 
\text{and} \qquad \left|\partial_{\bar z} f^\eps(x)\right|\leq C_3\eps
\end{equation*}
for all points $x$ in the interior of a triangle $\Delta$ of $T^\eps_K$. Here 
$\partial_z$ and $\partial_{\bar z}$ denote the Wirtinger derivatives applied 
to the linear maps $f^\eps|_\Delta$.
\end{enumerate}
\end{theorem}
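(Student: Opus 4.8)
The plan is to reformulate the existence of $f^\eps$ as a discrete Dirichlet problem for the scale factors and then solve it by perturbing off the smooth data $\hat u(v):=\log|f'(v)|$. Given scale factors $u$ on $V^\eps_K$, the new edge lengths $\ell_{vw}=|v-w|\mathrm e^{(u(v)+u(w))/2}$ determine, in each face, a Euclidean triangle (as long as the triangle inequalities hold), and the faces develop into $\C$ to produce a discrete conformal PL-map precisely when the angle sum $\Theta(u,v)$ of the new triangles around every interior vertex $v$ equals $2\pi$; since the support is a disc this flatness condition is also sufficient, the developing map being single-valued and the normalization fixing the remaining isometry. Thus I would fix $u=\hat u$ on $\partial V^\eps_K$ and seek interior values solving the nonlinear system $K(u,v):=2\pi-\Theta(u,v)=0$ at all interior $v$. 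For existence and uniqueness I would invoke the convex variational principle of Bobenko--Pinkall--Springborn and Luo: $K$ is the gradient of a functional convex in $u$, whose Hessian is the cotangent Laplacian $L$ of the new triangulation, positive definite because the angles stay strictly acute for $\eps$ small. Hence the Dirichlet problem has at most one solution, and existence follows once the perturbative construction below produces a solution inside the region of non-degenerate triangles.

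The heart of the argument --- and the step I expect to be the main obstacle --- is the \emph{consistency estimate}: that $\hat u$ nearly solves the discrete system, with
\[
|K(\hat u,v)|\le C\eps^4\qquad\text{for every interior vertex } v.
\]
A naive comparison only gives $O(\eps^2)$: replacing the geometric-mean lengths $\ell_{vw}=|v-w|\sqrt{|f'(v)||f'(w)|}$ by the true image lengths $|f(v)-f(w)|$ changes each angle by $O(\eps^2)$, and the true image triangles do fit around $f(v)$ with total angle $2\pi$. The improvement to $O(\eps^4)$ comes from three facts that force the $O(\eps^2)$ and $O(\eps^3)$ contributions to cancel upon summation around a vertex. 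First, $\log|f'|=\Re\log f'$ is \emph{harmonic}, so the linearization $L\hat u$ (which to leading order reproduces $\eps^2\,\Delta\log|f'|$) vanishes at this order. Second, the relative length defect has the form $\log\ell_{vw}-\log|f(v)-f(w)|=\Re\big(h^2\Psi(m)\big)+O(\eps^4)$ with $h=w-v$, $m$ the midpoint and $\Psi$ smooth; the leading angle response is linear in these defects with cotangent coefficients, and the resulting lattice sum is governed by $\sum_{w\sim v}c_{vw}(w-v)^2$, which \emph{vanishes as a complex number} because the cotangent weights $c_{vw}$ make the second moment $\sum_{w\sim v}c_{vw}(w-v)(w-v)^{T}$ isotropic (this is the FEM exactness of the cotangent Laplacian on linear functions). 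Third, since $\eps T$ is translation invariant the link of every interior vertex is centrally symmetric, so all odd-order ($\eps^3$) terms cancel. Carrying out the Taylor expansion to fourth order and collecting these cancellations is the long computation underlying the whole theorem.

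With consistency in hand I would close the argument by a quantitative Newton--Kantorovich (or fixed-point) scheme based at $\hat u$. The required stability estimate is a bound on the inverse of the linearized operator: because the triangles are strictly acute, $L$ has positive off-diagonal weights and is an M-matrix, so the discrete maximum principle applies; comparing against a barrier of the form $-|z|^2$, for which $L(-|z|^2)$ is a positive multiple of $\eps^2$, yields $\|L^{-1}g\|_\infty\le C\eps^{-2}\|g\|_\infty$ for zero boundary data. Writing $u=\hat u+\phi$ with $\phi=0$ on the boundary, the equation becomes $L(\hat u)\phi=-K(\hat u)+Q(\phi)$ with $Q$ quadratically small, and the estimates $\|K(\hat u)\|_\infty=O(\eps^4)$ and $\|L^{-1}\|_\infty=O(\eps^{-2})$ give a contraction on a ball of radius $O(\eps^2)$ in $\ell^\infty$. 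This produces a solution $u^\eps$ --- lying in the non-degenerate region, which completes existence and, with convexity, uniqueness --- together with the bound $\|u^\eps-\hat u\|_\infty\le C_1\eps^2$, which is exactly \eqref{eqconvu}, statement~(i).

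Finally I would pass from the scale factors to the maps. Estimate~(i) shows that every developed edge vector $f^\eps(w)-f^\eps(v)$ has length within a relative $O(\eps^2)$ of the true $|f(w)-f(v)|$, and that the new triangle angles agree with those of the image triangles $f(\Delta)$ up to $O(\eps^2)$. For~(iii), each image triangle is therefore congruent, up to relative error $O(\eps^2)$, to $f'(v)\cdot\Delta+O(\eps^2)$, so the linear map $f^\eps|_\Delta$ satisfies $\partial_z f^\eps=f'+O(\eps)$ and $\partial_{\bar z}f^\eps=O(\eps)$, once the global rotation is pinned down. Pinning the rotation is the one remaining subtlety: the argument of the discrete derivative is propagated through the developing map and must match $\arg f'=\Im\log f'$, the harmonic conjugate of $u$; I would control it by a discrete integration (summation of the consistent angles) along paths, using the normalization at $e_0$ as base point, so that the accumulated error stays $O(\eps)$. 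For~(ii) I would then integrate the edge-vector estimates along a lattice path of $O(\eps^{-1})$ edges from $0$ to a vertex near $x$; the per-edge error $O(\eps^2)$ sums to the claimed $O(\eps)$, and linearity on each triangle extends the bound from vertices to all points $x$ of the support.
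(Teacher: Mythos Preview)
Your proposal is correct and closely parallels the paper's proof; the one substantive difference is in how existence of $u^\eps$ and estimate~(i) are extracted from the consistency estimate. The paper, like you, establishes $|2\pi-\Theta(\hat u,v)|=O(\eps^4)$ (by computer algebra, after noting that central symmetry of the hexagonal link makes the angle-sum an even function of $\eps$; your conceptual explanation via harmonicity of $\log|f'|$ and isotropy of the cotangent second moment $\sum c_{vw}(w-v)^2$ is a welcome addition that the paper does not spell out). But instead of Newton--Kantorovich, the paper works directly with the \emph{nonlinear} problem via explicit barriers: it sets $w^\pm=\log|f'|\pm\eps^2(M^\pm-C^\pm|v|^2)$, chooses $C^\pm$ large enough that the angle sum at $w^+$ (resp.\ $w^-$) is strictly below (resp.\ above) $2\pi$, and then uses a monotonicity lemma (the angle sum at $v_0$ is monotone in each $u(v_j)-u(v_0)$, which is where the acute-angle hypothesis enters) to show that $-\mathrm{grad}\,E$ points into the box $W^\eps=\{w^-\le u\le w^+\}$ on every boundary face. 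Convexity of $E$ then traps the minimizer in $W^\eps$, giving existence and~(i) simultaneously. Your route linearizes first and uses the same quadratic barrier $-|z|^2$ to obtain the stability bound $\|L^{-1}\|_\infty\le C\eps^{-2}$; combined with $\|K(\hat u)\|_\infty=O(\eps^4)$ this yields a contraction on an $O(\eps^2)$-ball. The two arguments are really dual to one another: the paper applies the maximum principle to the nonlinear operator, you apply it to its linearization and close by iteration. The paper's version is marginally more elementary (no operator-norm bookkeeping, no need to check the contraction constant), while yours is more portable to settings without a convex functional. Parts~(ii) and~(iii) proceed the same way in both: show the per-triangle angle increments track $\arg f'$ along edges up to $O(\eps^2)$ (the paper does this via the dual edge $e^*$ and the identity $|e^*|=|e|\cot\varphi_e$), integrate over $O(\eps^{-1})$ edges from the normalized base edge $e_0$ to control the rotation $\psi^\eps$ globally to $O(\eps)$, and then read off the Wirtinger derivatives of the affine maps $f^\eps|_\Delta$.
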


Note that the subcomplexes $T^\eps_K$ may be chosen such that they 
approximate the compact set $K$.
Further notice that~\eqref{eqconvu} implies that $u^\eps$ 
converges to
$\log|f'|$ in $C^1$ with error of order $\eps$, in the sense that also
\begin{equation*}
 \left| \frac{u^\eps(v)-u^\eps(w)}{\eps} - \text{Re}\left( 
\frac{f''((v+w)/2)}{f'((v+w)/2)}\right)\right|\leq {\tilde C}\eps
\end{equation*}
 on edges $[v,w]$ uniformly for some constant $\tilde C$.

The proof of Theorem~\ref{theoConv} is given in Section~\ref{secProof}. The 
arguments are based on estimates derived in Section~\ref{secTaylor}. 

The problem of actually computing the 
scale factors $u$ for given boundary values $u_\partial$ such that $u$ gives 
rise to a discrete conformal PL-map (in case it exists) can be 
solved using a variational principle, see~\cite{BorisSch,Boris}. Our 
proof relies on investigations using the corresponding convex functional, see 
Theorem~\ref{theoEu} in Section~\ref{secBasic}.

\begin{remark}
 The convergence result of Theorem~\ref{theoConv} also remains true if linear 
interpolation is replaced with the piecewise projective interpolation schemes 
described in~\cite{Boris, BorisStefanich}, i.e., circumcircle preserving, angle 
bisector preserving and, generally, exponent-t-center preserving for all 
$t\in\R$. 
The proof is the same with only small 
adaptations. This is due to the fact that the image of the vertices is the same 
for all these interpolation schemes and these image points converge 
uniformly to the corresponding image points under $f$ with error of 
order~$\eps$. The estimates for the derivatives similarly follow from 
Theorem~\ref{theoConv}(i).
\end{remark}

\subsection{Other convergence results for discrete conformal maps}

Smooth conformal maps can be characterized in various ways. This leads to 
different notions of discrete conformality. Convergence issues have already
been studied for some of these discrete analogs. We only give a very short 
overview and cite some results of a growing literature.

In particular, linear definitions can be derived 
as discrete versions of the Cauchy-Riemann equations and have a long and 
still developing history. Connections of such discrete mappings to smooth 
conformal functions have been studied for example 
in~\cite{CFL28,LF55,Mer07,ChS12,S13,BS15,W14}.

The idea of characterizing conformal maps as local scale-rotations has lead to
the consideration of circle packings, more precisely to investigations on 
circle packings with the same (given) combinatorics of the 
tangency graph. Thurston~\cite{Thu85} first conjectured the convergence of 
circle packings to the Riemann map, which was then proven 
by~\cite{RS87,HeSch96,HeSch98}.

The theory of circle patterns generalizes the case of circle packings. Also, 
there is a link to integrable structures via isoradial circle 
patterns. The approximation of conformal maps using circle pattens has been 
studied in~\cite{Sch97,DM05,diss,paperconv,LD07}.

The approach taken in this article constructs discrete conformal maps from 
given boundary values. Our approximation 
results and some ideas of the proof are therefore similar to those 
in~\cite{Sch97,diss,paperconv} for circle patterns which also rely on boundary 
value problems.

\section{Some characterizations of associated 
scale factors of discrete conformal PL-maps}\label{secBasic}

Consider a subcomplex $T_S$ of a triangular lattice $T$ and an arbitrary 
function $u:V_S\to\R$. Assign new lengths to the edges according 
to~\eqref{eqdefdiscf} by 
\begin{equation}\label{eqdeftildel}
 \tilde{l}([v,w])=|v-w|\text{e}^{(u(v)+u(w))/2}
\end{equation}
In order to obtain new triangles with these lengths (and ultimately a discrete 
conformal PL-map) the triangle 
inequalities need to hold for the edge lengths $\tilde{l}$ on each triangle.
If we assume this, we can embed the new triangles (respecting orientation) and 
immerse sequences of 
triangles with edge lengths given by $\tilde{l}$ as in~\eqref{eqdeftildel}. In 
order to obtain a discrete conformal PL-map, in particular a local 
homeomorphism, the interior 
angles of the triangles need to sum up to $2\pi$ at each interior vertex. The 
angle at a vertex of a triangle with given side lengths can be calculated. With 
the notation of Figure~\ref{figTriang} we have the half-angle formula
\begin{equation}\label{defalpha1}
 \tan\left(\frac{\alpha}{2}\right) = 
\sqrt{\frac{(-b+a+c)(-c+a+b)}{(b+c-a)(a+b+c)}}
= \sqrt{\frac{1-(\frac{b}{a}-\frac{c}{a})^2}{(\frac{b}{a}+\frac{c}{a})^2-1}}.
\end{equation}
The last expression emphasizes the fact that the angle does not depend on the 
scaling of the triangle. Careful considerations of this angle function
depending on (scaled) side lengths of the triangle form the basis for our 
proof. In particular, we define the function
\begin{equation}\label{defangle}
 \theta(x,y):=2\arctan\sqrt{\frac{1-(\text{e}^{-x/2}
-\text{e}^{-y/2})^2}{(\text{e }^{-x/2} +\text{e}^{-y/2})^2-1}},
\end{equation}
so~\eqref{defalpha1} can be written as
\[\alpha=\theta(x,y)\qquad \text{with}\quad 
\frac{b}{a}=\text{e}^{-x/2}\ \text{ and }\ \frac{c}{a}=\text{e}^{-y/2}.\]

Summing up, we have the following characterization of
scale factors associated to discrete conformal PL-maps.

\begin{proposition}\label{lemTu2}
Let $T_S$ be a subcomplex of a triangular lattice $T$ and $u:V_S\to\R$ a 
function
satisfying the following two conditions.
\begin{enumerate}[(i)]
\item For every triangle $\Delta[v_1,v_2,v_3]$ of $T_S$ the {\em triangle 
inequalities} for $\tilde l$ defined by~\eqref{eqdeftildel} hold, in particular 
\begin{equation}\label{eqtriangineq}
|v_i-v_j|\text{e}^{(u(v_i)+u(v_j))/2}< 
|v_i-v_k|\text{e}^{(u(v_i)+u(v_k))/2} +|v_j-v_k|\text{e}^{(u(v_j)+u(v_k))/2}
\end{equation}
for all permutations $(ijk)$ of $(123)$.
\item For every interior vertex $v_0$ with neighbors
$v_1,v_2,\dots,v_k,v_{k+1}=v_1$ in cyclic order we have
\begin{equation}\label{eqsumalpha}
 \sum_{j=1}^k \theta(\lambda(v_0,v_j,v_{j+1})+
u(v_{j+1})-u(v_0), \lambda(v_0,v_{j+1},v_j) +u(v_j)-u(v_0))=2\pi,
\end{equation}
where $\lambda(v_a,v_b,v_c)= 2\log(|v_b-v_c|/|v_a-v_b|)$ for a triangle
$\Delta[v_a,v_b,v_c]$.
\end{enumerate}
Then there is a discrete conformal PL-map (unique up to post-composition with 
Euclidean motions) such that its associated scale factors are the given 
function $u:V_S\to\R$.

Conversely, given a discrete conformal PL-map on a subcomplex $T_S$ of a 
triangular lattice $T$, its associated scale factors $u:V_S\to\R$ satisfy 
conditions~(i) and~(ii).
\end{proposition}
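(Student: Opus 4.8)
The plan is to treat both directions through the developing-map construction for piecewise-Euclidean metrics, under which conditions~(i) and~(ii) appear as exactly the two obstructions to assembling the rescaled triangles into a PL-map into $\C$: condition~(i) lets each abstract triangle be realized, and condition~(ii) lets the realized triangles be glued consistently around interior vertices.

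First I would establish the converse, which is the easier direction. Given a discrete conformal PL-map $g$ with scale factors $u$, each image triangle is nondegenerate because $g$ is an orientation-preserving linear isomorphism on it; its side lengths equal the $\tilde l$ of~\eqref{eqdeftildel} by~\eqref{eqdefdiscf}, so they satisfy the triangle inequalities, giving~(i). For~(ii), I would fix an interior vertex $v_0$ and use that $g$ is a local homeomorphism there: the images of the incident triangles $\Delta[v_0,v_j,v_{j+1}]$ cover a neighborhood of $g(v_0)$ exactly once, so their angles at $g(v_0)$ sum to $2\pi$. It then remains to identify each angle with a summand of~\eqref{eqsumalpha}; this is a direct substitution of the rescaled lengths~\eqref{eqdeftildel} into the half-angle formula~\eqref{defalpha1}, which turns the two log-ratios of adjacent to opposite side into $\lambda(v_0,v_j,v_{j+1})+u(v_{j+1})-u(v_0)$ and $\lambda(v_0,v_{j+1},v_j)+u(v_j)-u(v_0)$, so the angle equals $\theta$ of these arguments.

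For the forward direction I would build $g$ by development. Condition~(i) supplies, for each face $\Delta[v_1,v_2,v_3]$, a Euclidean triangle with side lengths $\tilde l$, unique up to orientation-preserving isometry; I fix these models. Then I place one seed triangle in $\C$ by an orientation-preserving linear map realizing its $\tilde l$-lengths and propagate across shared edges, each extension being uniquely forced by continuity and orientation preservation. The properties of $g$ then follow from the construction: it is continuous and linear on each triangle, orientation-preserving, its edge lengths are the prescribed $\tilde l$, and---because the three relations $u(v_i)+u(v_j)=2\log(\tilde l([v_i,v_j])/|v_i-v_j|)$ on a single triangle determine each $u(v_i)$ uniquely---its associated scale factors are exactly the given $u$.

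The hard part will be showing that this development is consistent, i.e.\ that the placement of each triangle is independent of the chain of adjacent triangles used to reach it; this is where condition~(ii) and the local homeomorphism property enter. The obstructions are the holonomies around closed loops in the triangle-adjacency (dual) graph. Around an interior vertex $v_0$ all incident triangles share the point $g(v_0)$, so the holonomy of the corresponding link loop is a rotation about $g(v_0)$ through the total angle subtended there; condition~(ii) makes this angle exactly $2\pi$, so the holonomy is trivial and, simultaneously, the star of $v_0$ is mapped homeomorphically onto a neighborhood of $g(v_0)$---which is what makes $g$ a local homeomorphism at interior points. Since the relevant $T_S$ is simply connected (here $T^\eps_K$ is homeomorphic to a closed disc), these link loops generate all dual cycles, so every holonomy vanishes and $g$ is globally well-defined. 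Finally, uniqueness up to post-composition with a Euclidean motion is immediate: the lengths $\tilde l$ fix every triangle up to orientation-preserving isometry, and the entire development is determined by the placement of the seed triangle, i.e.\ by a single element of the orientation-preserving isometry group of $\C$.
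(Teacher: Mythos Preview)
Your argument is correct and matches the paper's approach: the paper does not give a formal proof of this proposition but states it as a summary of the preceding paragraph, which sketches exactly your developing-map idea---realize each triangle from~(i), then ``immerse sequences of triangles'' and require the angle sum~$2\pi$ at interior vertices for the local homeomorphism property. Your write-up simply fills in the details of that sketch (holonomy around vertex links, uniqueness via the seed placement), and you correctly flag that global well-definedness uses simple connectivity of $T_S$, an assumption the proposition does not state explicitly but which holds in every application in the paper since $T^\eps_K$ is taken homeomorphic to a closed disc.
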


In order to obtain discrete conformal PL-maps from a given smooth conformal 
map we will consider a Dirichlet problem for the associated scale factors. 
Therefore we will apply a theorem from~\cite{Boris} which characterizes the 
scale factors $u$ for given boundary values using a variational principle for a 
functional $E$ defined in~\cite[Section~4]{Boris}. Note that we will not need 
the exact expression for $E$ but only the formula for its partial derivatives. 
In fact, the vanishing of these derivatives is equivalent to the necessary 
condition~\eqref{eqsumalpha} for the scale factors to correspond to a 
discrete conformal PL-map.

\begin{theorem}[\cite{Boris}]\label{theoEu}
Let $T_S$ be a subcomplex of a triangular lattice and let 
$u_\partial:V_\partial\to\R$ be a 
function on the boundary vertices $V_\partial$ of $T_S$. Then the solution 
$\tilde u$ (if it exists)
of equation~\eqref{eqsumalpha} at all interior vertices with ${\tilde 
u}|_{V_\partial}=u_\partial$ is the 
unique argmin of a locally strictly convex functional $E(u)=E_{T_S}(u)$
which is defined for functions $u:V\to\R$ satisfying the
inequalities~\eqref{eqtriangineq}. 

The partial derivative of $E$ with respect
to $u_i=u(v_i)$ at an interior vertex $v_i\in V_{int}$ with $k$ neighbors
$v_{i_1},v_{i_2},\dots,v_{i_k}v_{i_{k+1}}=v_{i_1}$ in cyclic order is
\begin{equation}
 \frac{\partial E}{\partial u_i}(u) = 2\pi -  \sum_{j=1}^k
\theta(2\log\left(\frac{l_{i_{j+1},i_j}}{l_{i,i_{j+1}}}\right)+ u_{i_j}- u_i,
2\log\left(\frac{l_{i_{j+1},i_j}}{l_{i,i_{j}}}\right)+ u_{i_{j+1}} -u_i),
\end{equation}
where $l_{j,k}=|v_j-v_k|$.

By Proposition~\ref{lemTu2} such a solution $\tilde u$ are then scale factors 
associated to a discrete conformal PL-map.
\end{theorem}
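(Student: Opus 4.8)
The plan is to construct the functional $E$ explicitly as a sum of per-triangle potentials plus a linear term, read off the partial-derivative formula by differentiation, and derive convexity from the positive semidefiniteness of the per-triangle Hessians, which turn out to be cotangent Laplacians.

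\medskip

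\noindent\textbf{Per-triangle potential and integrability.} First I would fix a single triangle $\Delta=\Delta[v_1,v_2,v_3]$ of $T_S$ and regard its three interior angles $\alpha_1,\alpha_2,\alpha_3$ (given by \eqref{defalpha1}--\eqref{defangle}) as functions of the triple $(u_1,u_2,u_3)$ of scale factors at its vertices, defined on the open set $\mathcal F_\Delta\subset\R^3$ where the triangle inequalities \eqref{eqtriangineq} hold. The key step is to show that the $1$-form $\omega_\Delta=\alpha_1\,du_1+\alpha_2\,du_2+\alpha_3\,du_3$ is closed, i.e.\ that $\partial\alpha_i/\partial u_j=\partial\alpha_j/\partial u_i$. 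I would obtain this by differentiating the half-angle formula \eqref{defalpha1} (equivalently the law of cosines): the computation yields the explicit Jacobian $\partial\alpha_i/\partial u_j=-\tfrac12 L^\Delta_{ij}$, where $L^\Delta$ is the \emph{cotangent-weight Laplacian} of $\Delta$, namely the symmetric matrix whose off-diagonal entry for the edge $[v_i,v_j]$ is $-\cot\alpha_k$ (with $\alpha_k$ the angle at the opposite vertex) and whose rows sum to zero. Symmetry of $L^\Delta$ gives closedness of $\omega_\Delta$, and since $\mathcal F_\Delta$ is simply connected, Poincaré's lemma produces a potential $E_\Delta$ on $\mathcal F_\Delta$ with $\partial E_\Delta/\partial u_i=\alpha_i$ (one may also write $E_\Delta$ explicitly through the Lobachevsky function, but this is not needed).

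\medskip

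\noindent\textbf{Global functional and the derivative formula.} I would then set, on the feasible region where all inequalities \eqref{eqtriangineq} hold,
\[
E(u)=2\pi\sum_{v\in V_{int}}u(v)-\sum_{\Delta\text{ in }T_S}E_\Delta(u|_\Delta).
\]
Differentiating termwise at an interior vertex $v_i$, only the triangles containing $v_i$ contribute, giving $\partial E/\partial u_i=2\pi-\sum_{\Delta\ni v_i}\alpha_i^\Delta$. A short check (using $\lambda$ and \eqref{defangle}) that the two arguments of $\theta$ in the stated formula are exactly the quantities $x,y$ for which $\theta(x,y)$ is the angle at $v_i$ in the corresponding triangle then identifies this with the claimed expression. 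In particular the vanishing of $\partial E/\partial u_i$ at every interior vertex is precisely equation \eqref{eqsumalpha}.

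\medskip

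\noindent\textbf{Convexity and uniqueness.} By the previous two steps $\mathrm{Hess}(E)$ equals the assembled cotangent Laplacian $\sum_\Delta L^\Delta$ (up to the positive factor from Step~1). Each $L^\Delta$ is positive semidefinite with kernel exactly $\R(1,1,1)$: writing its quadratic form as $\cot\alpha_1\,(t_2-t_3)^2+\cot\alpha_2\,(t_3-t_1)^2+\cot\alpha_3\,(t_1-t_2)^2$ and eliminating one difference, positivity off the constants follows from the two elementary identities $\cot\alpha_i+\cot\alpha_j=\sin\alpha_k/(\sin\alpha_i\sin\alpha_j)>0$ and $\cot\alpha_1\cot\alpha_2+\cot\alpha_2\cot\alpha_3+\cot\alpha_3\cot\alpha_1=1$, both valid whenever $\alpha_1+\alpha_2+\alpha_3=\pi$. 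Hence $E$ is convex, and restricted to variations supported on the interior vertices (boundary values fixed to $u_\partial$) a vector in the kernel must lie in every per-triangle kernel, hence be constant on $T_S$, hence vanish once $T_S$ is connected with nonempty boundary. Thus $\mathrm{Hess}(E)$ is positive definite in the interior directions, so $E$ is locally strictly convex and has at most one critical point. For global uniqueness of the argmin one extends $E$ continuously to all of $\R^{V_{int}}$, assigning to each triangle its degenerate limit angles ($0$ or $\pi$) outside $\mathcal F_\Delta$; this yields a convex $C^1$ function on a convex domain, whose unique critical point is the solution $\tilde u$ of \eqref{eqsumalpha} with $\tilde u|_{V_\partial}=u_\partial$. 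By Proposition~\ref{lemTu2} this $\tilde u$ consists of scale factors associated to a discrete conformal PL-map.

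\medskip

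\noindent I expect the integrability step to be the main obstacle: establishing the symmetry $\partial\alpha_i/\partial u_j=\partial\alpha_j/\partial u_i$ and its identification with the cotangent Laplacian is the conceptual heart, after which both the derivative formula and convexity reduce to bookkeeping and the two cotangent identities. A secondary technical point is the geometry of the feasible region --- checking $\mathcal F_\Delta$ is simply connected so that Poincaré applies, and handling the non-convexity of the global feasible set, which is exactly why the convex extension to $\R^{V_{int}}$ is used to upgrade local strict convexity to a \emph{unique} argmin.
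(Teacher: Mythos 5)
A preliminary point on the baseline: the paper itself contains no proof of this statement --- Theorem~\ref{theoEu} is imported verbatim from \cite{Boris}, and the only in-paper material surrounding it is the derivative formula and the remark that $E$ extends to a convex $C^1$ function on $\R^V$, ``see \cite{Boris} for details.'' Measured against the argument in \cite{Boris}, your reconstruction is essentially the same proof: per-triangle potentials with $\partial E_\Delta/\partial u_i=\alpha_i$, global functional $2\pi\sum u_i-\sum_\Delta E_\Delta$, Hessian equal to one half the assembled cotangent Laplacian, and a degenerate extension to cope with the non-convex feasible set. The differences are minor. Where you invoke closedness of $\omega_\Delta$ plus Poincar\'e's lemma, \cite{Boris} writes $E_\Delta$ explicitly in terms of Milnor's Lobachevsky function, which buys the $C^1$ convex extension concretely rather than abstractly; your worry about simple connectedness of $\mathcal{F}_\Delta$ dissolves anyway, since $u\mapsto\bigl(\tilde\lambda_{ij}\bigr)$ is an affine bijection and the triangle-inequality region in the actual lengths $\tilde l$ is a convex cone, so $\mathcal{F}_\Delta$ is homeomorphic to a convex set. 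Your Jacobian $\partial\alpha_i/\partial u_j=\tfrac12\cot\alpha_k$ is consistent with the derivative $\tfrac12\cot\gamma$ that the paper itself computes in the proof of Lemma~\ref{lemMonalpha}, and your two cotangent identities correctly give positive semidefiniteness of the per-triangle form \emph{without} any acuteness assumption --- which matters, since Theorem~\ref{theoEu}, unlike Theorem~\ref{theoexist}, assumes none. The one genuine lacuna is the final step: that extending each angle by its degenerate limit values ($0$ or $\pi$) produces a convex, continuously differentiable function on all of $\R^{V}$ is asserted rather than proved, and this is nontrivial (one must verify $C^1$-matching of gradients across the boundary of the feasible region and that convexity survives the gluing); it is exactly the statement the paper's own remark outsources to \cite{Boris}, and your unique-argmin conclusion rests on it. So your proof is complete modulo that cited fact --- which is the same position the paper itself takes.
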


\begin{remark}
The functional $E$ can be extended to a convex continuously
differentiable function on $\R^V$, see~\cite{Boris} for details.
\end{remark}

\section{Taylor expansions}\label{secTaylor}
We now examine the effect when we take $u=\log|f'|$ as `scale 
factors', i.e.\ 
for each triangle we multiply the length $|v-w|$ of an edge $[v,w]$ by the 
geometric mean $\sqrt{|f'(v)f'(w)|}$ of $|f'|$ at the vertices.
The proof of Theorem~\ref{theoConv} is based on the idea that $u=\log|f'|$ 
almost satisfies the
conditions for being the associated scale factors of an discrete conformal 
PL-map, that is conditions~(i) and~(ii) of Proposition~\ref{lemTu2}, and 
therefore is close to the exact solution $u^\eps$. 

To be precise, suppose that $\eps T$ is the equilateral 
triangulation of the plane. Assume without loss of generality that the
edge lengths equal $\frac{\sqrt{3}}{2}\eps>0$ and edges are parallel to 
$\text{e}^{ij\pi/3}$ for $j=0,1,\dots, 5$. 
Let the conformal function $f$, the compact set $K$, and the subcomplexes 
$T^\eps_K$ (with vertices $V^\eps_K$ and edges $E^\eps_K$) be given as in 
Theorem~\ref{theoConv}.
Let $v_0\in V^\eps_{K, \text{int}}$ be an interior vertex. Here and below 
$V^\eps_{K,\text{int}}$ denotes the set of interior vertices having six 
neighbors in $V^\eps_K$. Denote the neighbors of $v_0$ by $v_j= v_0+\eps 
\frac{\sqrt{3}\text{e}^{ij\frac{\pi}{3}}}{2}$ and consider the triangle 
$\Delta_j= \Delta[v_0,v_j,v_{j+1}]$ for some $j\in \{0,1,\dots, 
5\}$. Taking $u=\log|f'|$, we obtain edge lengths of a new triangle ${\tilde 
\Delta}_j$, i.e.\ satisfying~\eqref{eqtriangineq}, if $\eps$ is small enough. 
Then 
the angle in ${\tilde \Delta}_j$ at the image vertex of $v_0$ is given by
\[ \theta(\log|f'(v_0+\eps \textstyle
\frac{\sqrt{3}\text{e}^{ij\frac{\pi}{3}}}{2})| -\log|f'(v_0)|,\, 
\log|f'(v_0+\eps 
\frac{\sqrt{3}\text{e}^{i(j+1)\frac{\pi}{3}}}{2})|
-\log|f'(v_0)|) \]
according to~\eqref{defangle}. Summing up these angles --- that is inserting
$\log|f'|$ into~\eqref{eqsumalpha} instead of $u$ 
at an interior vertex $v_0\in V^\eps_{K, \text{int}}$ --- we obtain the 
function
\begin{multline*}
 {\cal S}_{v_0}(\eps)= \\
\sum_{j=0}^5 \theta(\log|f'(v_0+\eps \textstyle
\frac{\sqrt{3}\text{e}^{ij\frac{\pi}{3}}}{2})| -\log|f'(v_0)|,\, 
\log|f'(v_0+\eps 
\frac{\sqrt{3}\text{e}^{i(j+1)\frac{\pi}{3}}}{2})| -\log|f'(v_0)|)
\end{multline*}
We are interested in the Taylor expansion of ${\cal S}_{v_0}$ in $\eps$.
The symmetry of the lattice $T$ implies that ${\cal S}_{v_0}$ is 
an even function, so the expansion contains only even powers of $\eps^n$.
Using a computer algebra program  we arrive at
\begin{align}\label{eqTaylorexp}
 {\cal S}_{v_0}(\eps)= 2\pi + C_{v_0}\eps^4 +\Or(\eps^6).
\end{align}
Here and below, the notation $h(\eps)=\Or(\eps^n)$ means that there is a
constant $\cal C$,
such that $|h(\eps)|\leq {\cal C}\eps^n$ holds for all small enough $\eps>0$.
The constant of the $\eps^4$-term is
\[C_{v_0}= \textstyle
-\frac{3\sqrt{3}}{32} \text{Re}\left(S(f)(v_0) 
\overline{\left({\textstyle \frac{f''}{f'}}\right)'}(v_0)\right),\] 
where 
$S(f)=\left(\frac{f''}{f'}\right)' -\frac{1}{2} 
\left( \frac{f''}{f'}\right)^2$ is the Schwarzian derivative of $f$.
We will not need the exact form of this constant, but only the fact 
that it is bounded on $K$.

Analogous results to~\eqref{eqTaylorexp} hold for all triangular
lattices $\eps T$ with edge lengths $a^\eps=\eps\sin\alpha$, 
$b^\eps=\eps\sin\beta$, $c^\eps=\eps\sin\gamma$, also if the angles
are larger than $\pi/2$. We assume without loss of generality the edge 
directions being parallel to $1$, 
$\text{e}^{i\alpha}$ and $\text{e}^{i(\alpha+\beta)}$. 
Arguing as above, we consider the function
\begin{align*}
 {\cal S}_{v_0}(\eps) =&\quad \theta(\textstyle 
2\log\frac{\sin\alpha}{\sin\gamma} +\log|\frac{f'(v_0+\eps 
\sin\beta)}{f'(v_0)}|,\, 2\log\frac{\sin\alpha}{\sin\beta}+
\log|\frac{f'(v_0+\eps \sin\gamma  \text{e}^{i \alpha})}{f'(v_0)}|) \\
&+ \theta(\textstyle 2\log\frac{\sin\beta}{\sin\alpha}+ \log|\frac{f'(v_0+\eps 
\sin\gamma\,  \text{e}^{i \alpha})}{f'(v_0)}|, 
\, 2\log\frac{\sin\beta}{\sin\gamma}+ \log|\frac{f'(v_0+\eps \sin\alpha\,  
\text{e}^{i (\alpha+\beta)})}{f'(v_0)}|) \\
&+ \theta(\textstyle 2\log\frac{\sin\gamma}{\sin\beta}+ \log|\frac{f'(v_0+\eps 
\sin\alpha\,  \text{e}^{i (\alpha+\beta)})}{f'(v_0)}|,\, 
2\log\frac{\sin\gamma}{\sin\alpha}+\log|\frac{f'(v_0-\eps 
\sin\beta)}{f'(v_0)}|) \\
&+ \theta(\textstyle 2\log\frac{\sin\alpha}{\sin\gamma}+ \log|\frac{f'(v_0-\eps 
\sin\beta)}{f'(v_0)}|, \, 2\log\frac{\sin\alpha}{\sin\beta}+
\log|\frac{f'(v_0-\eps \sin\gamma \, \text{e}^{i \alpha})}{f'(v_0)|}) \\
&+ \theta(\textstyle 2\log\frac{\sin\beta}{\sin\alpha}+ \log|\frac{f'(v_0-\eps 
\sin\gamma\,  \text{e}^{i \alpha})}{f'(v_0)}|,\, 
2\log\frac{\sin\beta}{\sin\gamma}+ \log|\frac{f'(v_0-\eps \sin\alpha\,  
\text{e}^{i (\alpha+\beta)})}{f'(v_0)}|) \\
&+ \theta(\textstyle 2\log\frac{\sin\gamma}{\sin\beta}+ \log|\frac{f'(v_0-\eps 
\sin\alpha \, \text{e}^{i (\alpha+\beta)})}{f'(v_0)}|,\, 
2\log\frac{\sin\gamma}{\sin\alpha}+ \log|\frac{f'(v_0+\eps 
\sin\beta)}{f'(v_0)}|).
\end{align*}
Again, ${\cal S}_{v_0}$ is an even function.
Using a computer algebra program  we arrive at
\begin{align}
 {\cal S}_{v_0}(\eps)= 2\pi + C_{v_0}\eps^4 +\Or(\eps^6),
\end{align}
with corresponding constant
\begin{multline*}
\textstyle
C_{v_0}= -\frac{\sin\alpha \sin\beta \sin\gamma}{4}\; \text{Re}\left(
S(f)(v_0) \overline{\left(\frac{f''}{f'}\right)'}(v_0)\right. \\
\textstyle
\left. +c(\alpha,\beta,\gamma)
\left(\frac{1}{2}\left(\frac{f''}{f'}\right)^2 \left(\frac{f''}{f'}\right)' 
-\frac{1}{3}\left(\frac{f''}{f'}\right)''' \right)\right),
\end{multline*} 
where $c(\alpha,\beta,\gamma)=\cos\beta\sin^3\beta 
+\cos\gamma\sin^3\gamma\text{e}^{2i\alpha} 
+\cos\alpha\sin^3\alpha\text{e}^{2i(\alpha+\beta)}$.

Our key observation is that we can control the sign of the 
$\Or(\eps^4)$-term in~\eqref{eqTaylorexp} if we
replace $\log|f'(x)|$ by $\log|f'(x)|+a\eps^2|x|^2$,
where $a\in\R$ is some suitable constant. In particular, for positive constants 
$M^\pm,C^\pm$ consider the functions
\begin{align*}
w^\pm &= \log|f'| +q^\pm
&\text{ with } q^\pm(v) &=\begin{cases}\pm\eps^2(M^\pm -C^\pm|v|^2) & \text{for
}v\in
V^\eps_{K,\text{int}}, \\
        0 & \text{for }v\in \partial V^\eps_K.
       \end{cases}
\end{align*}
Here and below $\partial V^\eps_K$ denotes the set of boundary vertices 
of $V^\eps_K$.

Then we obtain for equilateral triangulations with edge length 
$\frac{\sqrt{3}}{2}\eps$ the following Taylor expansion
for all interior vertices $v_0\in V^\eps_{K,\text{int}}$ whose neighbors 
are also in $V^\eps_{K,\text{int}}$:
\begin{multline}
\sum_{j=0}^5 \theta(w^\pm(v_0+\eps \textstyle \frac{\sqrt{3}}{2}
\text{e}^{ij\frac{\pi}{3}}) -w^\pm(v_0), w^\pm(v_0+\eps \frac{\sqrt{3}}{2}
\text{e}^{i(j+1)\frac{\pi}{3}}) -w^\pm(v_0)) \\
\textstyle
= 2\pi + (C_{v_0}\mp \frac{3\sqrt{3}}{2}C^\pm)\eps^4 
+\Or(\eps^5).\label{eqestw}
\end{multline}
Again, analogous results hold for all regular triangular lattices, where the
corresponding $\Or(\eps^4)$-term then is 
\[C_{v_0}\mp 4\textstyle \sin\alpha \sin\beta \sin\gamma\; C^\pm.\]

For interior vertices $v_0\in V^\eps_{K,\text{int}}$ which are incident 
to $k$ 
boundary vertices we obtain instead of the right-hand side of~\eqref{eqestw}:
\[2\pi \mp 
k\textstyle\frac{\sqrt{3}}{4} (M^\pm-C^\pm|v_0|^2)\eps^2 +\Or(\eps^4).\]
For general triangular lattices we get for every edge $e=[v_0,v_j]$ 
which is incident to a boundary vertex $v_j\in\partial V^\eps_K$ a term 
$\mp(M^\pm-C^\pm|v_0|^2)\cos\varphi_e\sin\varphi_e\,\eps^2$ where $\varphi_e$ 
is 
the angle opposite to the edge $e$, see Figure~\ref{FigDefDual}.

The following lemma summarizes the main properties of $w^\pm$ which follow from 
the definition of $w^\pm$ together with the preceding estimates.

\begin{lemma}\label{lemwpm}
 $w^\pm$ satisfies the boundary condition $w^\pm|_{\partial V^\eps_K} =
\log|f'| \big|_{\partial V^\eps_K}$.

Furthermore, $C^\pm>0$ and $M^\pm>0$ can be chosen such that for all $\eps$ 
small enough and all $v_0\in V^\eps_{K, \text{int}}$:
\begin{enumerate}[(i)]
\item $q^+(v_0)>0$ and $q^-(v_0)<0$\smallskip
 \item If $v_1, v_2,\dots, v_6,v_7=v_1$ denote the chain of neighboring
vertices of $v_0$ in cyclic order and $\lambda(v_a,v_b,v_c)=
2\log(|v_b-v_c|/|v_a-v_b|)$ for any triangle $\Delta[v_a,v_b,v_c]$, we 
have
\begin{align*}
\sum_{j=1}^6 \theta(&\lambda(v_0,v_{j+1},v_j)
+w^+(v_{j})- w^+(v_0), \lambda(v_0,v_j,v_{j+1}) +w^+(v_{j+1})-w^+(v_0)) < 
2\pi,\\[1ex]
\sum_{j=1}^6 \theta(&\lambda(v_0,v_{j+1},v_j)
+w^-(v_{j})- w^-(v_0),\lambda(v_0,v_j,v_{j+1}) +w^-(v_{j+1})-w^-(v_0)) > 2\pi
\end{align*}
\end{enumerate}
The choices of $C^\pm$ and $M^\pm$ only depend on $f$ (and its derivatives), 
$K$, and on the angles of the triangular lattice $T$.
\end{lemma}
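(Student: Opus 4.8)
The plan is to verify the three assertions in turn, dispatching the boundary identity and the sign condition~(i) quickly and reserving the real work for the angle-sum inequalities in~(ii). The boundary statement is immediate from the definition of $q^\pm$: since $q^\pm$ vanishes on $\partial V^\eps_K$, we have $w^\pm = \log|f'| + q^\pm = \log|f'|$ there. For~(i), set $R=\max_{x\in K}|x|$, which is finite because $K$ is compact; then $M^\pm - C^\pm|v_0|^2 \geq M^\pm - C^\pm R^2$ for every interior vertex, so imposing $M^\pm > C^\pm R^2$ already guarantees $q^+(v_0)>0$ and $q^-(v_0)<0$ for all $v_0\in V^\eps_{K,\text{int}}$ at once.

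The heart of the argument is~(ii), and I would split the vertices $v_0\in V^\eps_{K,\text{int}}$ into two classes according to which expansion of Section~\ref{secTaylor} applies. If all six neighbors of $v_0$ lie in $V^\eps_{K,\text{int}}$, then $q^\pm$ takes its smooth interior form at $v_0$ and at each neighbor, its contribution enters the angle sum only at order~$\eps^4$, and the sum is the one in~\eqref{eqestw} with leading correction the $\eps^4$-coefficient $C_{v_0}\mp\frac{3\sqrt3}{2}C^\pm$ (for general lattices, $C_{v_0}\mp 4\sin\alpha\sin\beta\sin\gamma\,C^\pm$). If instead $v_0$ is incident to at least one boundary vertex, the pinned value $q^\pm=0$ there creates an order-$\eps^2$ jump, and the angle sum begins one order earlier, with the $\eps^2$-coefficient $\mp k\frac{\sqrt3}{4}(M^\pm-C^\pm|v_0|^2)$ (for general lattices, $\mp(M^\pm-C^\pm|v_0|^2)\sum_e\cos\varphi_e\sin\varphi_e$, the sum over the boundary-incident edges $e$ at $v_0$ and $\varphi_e$ the angle opposite $e$). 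These two classes exhaust $V^\eps_{K,\text{int}}$.

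For the first class I would choose $C^\pm$ so large that the $\eps^4$-coefficient is bounded away from zero with the correct sign uniformly in $v_0$: since $C_{v_0}$ is bounded on $K$ (as noted after~\eqref{eqTaylorexp}) and $\sin\alpha\sin\beta\sin\gamma>0$, taking $C^+ > \frac{2}{3\sqrt3}\sup_K C_{v_0}$ forces $C_{v_0}-\frac{3\sqrt3}{2}C^+ \leq -\delta < 0$, and taking $C^-$ correspondingly large forces $C_{v_0}+\frac{3\sqrt3}{2}C^- \geq \delta > 0$. With $C^\pm$ now fixed, I would then pick $M^\pm > C^\pm R^2$ as in~(i); the order of the two choices matters but is not circular, $C^\pm$ being selected from the first class and $M^\pm$ only afterward from~(i) and the second class. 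For that second class, $M^\pm - C^\pm|v_0|^2 \geq M^\pm - C^\pm R^2 > 0$ together with $k\geq 1$ gives the $\eps^2$-coefficient exactly the sign~$\mp$, \emph{provided} each opposite angle satisfies $\varphi_e\in(0,\pi/2)$ so that $\cos\varphi_e\sin\varphi_e>0$. This is the one step where strict acuteness of $T$ is indispensable: an obtuse $\varphi_e$ would flip the sign of the leading $\eps^2$ term and destroy the inequality.

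It remains to absorb the remainders uniformly. In the first class the sum equals $2\pi \mp (\text{positive})\,\eps^4 + \Or(\eps^5)$ and in the second $2\pi \mp (\text{positive})\,\eps^2 + \Or(\eps^4)$, so in each case the signed leading term dominates once $\eps<\eps_0$ for a suitable threshold. Compactness of $K$ and holomorphy of $f$ bound all derivatives of $f$ occurring in $C_{v_0}$ and in the $\Or$-constants, while $k\leq 5$ and $|v_0|\leq R$; hence $\delta$, the positive lower bound on the $\eps^2$-coefficient, and $\eps_0$ can all be taken independent of $v_0$. This yields the strict inequalities of~(ii) for $w^+$ ($<2\pi$) and $w^-$ ($>2\pi$) at every interior vertex simultaneously, with all choices depending only on $f$, $K$, and the lattice angles. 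I expect the main obstacle to be exactly this multi-scale bookkeeping --- producing a single pair $(C^\pm,M^\pm)$ and a single $\eps_0$ that work uniformly across both vertex classes, where the bulk estimate is of order $\eps^4$ but the near-boundary estimate is only of order $\eps^2$ --- rather than any individual computation.
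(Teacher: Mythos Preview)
Your proposal is correct and follows exactly the route the paper indicates: the paper does not give a separate proof but states that the lemma ``follows from the definition of $w^\pm$ together with the preceding estimates,'' and you have carefully filled in precisely those details --- the two-class split (all-interior neighbors versus boundary-adjacent), the choice of $C^\pm$ to dominate the bounded $C_{v_0}$ in the $\eps^4$-term of~\eqref{eqestw}, and then $M^\pm>C^\pm R^2$ to force the sign in~(i) and in the $\eps^2$-term near the boundary. Your observation that strict acuteness is already needed here (so that $\cos\varphi_e\sin\varphi_e>0$ in the boundary-adjacent expansion) is accurate and slightly sharpens the paper's later remark that acuteness enters ``only'' in the Monotonicity Lemma; the minor slip $k\le 5$ should be $k\le 6$, but this is irrelevant to the argument.
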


In analogy to the continuous case we interpret equation~\eqref{eqsumalpha} as a 
non-linear Laplace equation for $u$. In this spirit $w^+$ may be taken as 
superharmonic function and $w^-$ as subharmonic function.

\section{Existence of discrete conformal PL-maps and 
estimates}\label{secProof}

The functions $w^\pm$ have been introduced in order to ``catch'' the
solution $u^\eps$ in the following compact set:
\begin{multline*}
 W^\eps=\{u:V^\eps_K\to\R\ |\ u(v) =
\log|f'(v)| \text{ for all } v\in\partial V^\eps_K,\\ w^-(v)\leq 
u(v)\leq w^+(v) \text{
for all } v\in V^\eps_{K, \text{int}} \}.
\end{multline*}
Note that $W^\eps$ is a $n$-dimensional interval in $\R^n$ for $n=|V^\eps_K|=$ 
number of vertices, if we identify a
function $u:V^\eps_K\to\R$ with the vector of its values $u(v_i)$.
Also, for neighboring vertices $v_i\sim v_j$ and $u\in W^\eps$ we have
$u(v_j)-u(v_i)=\Or(\eps)$.
Therefore, $u\in W^\eps$ satisfies the triangle
inequalities~\eqref{eqtriangineq} if $\eps$ is small enough.

Our aim is to show that for $\eps$ small enough there exists a function 
$u^\eps$ satisfying conditions~(i) and~(ii) of Proposition~\ref{lemTu2} and 
$u^\eps(v)=\log|f'(v)|$ for all boundary vertices $v\in\partial V^\eps_K$. This 
function then defines a discrete conformal PL-map $f^\eps$ 
(uniquely if we use the normalization of Theorem~\ref{theoConv}). 

\begin{theorem}\label{theoexist}
Assume that all angles of the triangular lattice $T$ are strictly 
smaller than $\pi/2$.
 There is an $\eps_0>0$
(depending on $f$, $K$ and the triangulation parameters)
such that for all $0<\eps<\eps_0$ the minimum of the functional $E$
(see Theorem~\ref{theoEu}) with boundary conditions~\eqref{eqboundu} is
attained in $W^\eps$.
\end{theorem}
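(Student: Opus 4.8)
The plan is to obtain the claimed minimizer as the minimizer of $E$ restricted to the compact box $W^\eps$, and then to use a discrete maximum principle to show that this restricted minimizer does not touch the barriers $w^\pm$ at any interior vertex. Since $E$ extends to a convex function on all of $\R^{V^\eps_K}$ (Remark after Theorem~\ref{theoEu}), any interior critical point is automatically a global minimizer with the prescribed boundary values, and this will prove the theorem.

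First I would settle existence. As noted just before the theorem, every $u\in W^\eps$ satisfies the triangle inequalities~\eqref{eqtriangineq} once $\eps$ is small, so $E$ is defined and locally strictly convex on the compact set $W^\eps$; being continuous it attains a minimum there, say at $u^\eps$. Writing $\Phi_v(u):=\partial E/\partial u(v)(u)=2\pi-\sum_j\theta(\cdots)$ for interior vertices $v$, the first-order optimality conditions for minimizing over the box $\{w^-\le u\le w^+\}$ (with the boundary values fixed by~\eqref{eqboundu}) read: $\Phi_v(u^\eps)\le0$ whenever $u^\eps(v)=w^+(v)$, and $\Phi_v(u^\eps)\ge0$ whenever $u^\eps(v)=w^-(v)$. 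On the other hand, Lemma~\ref{lemwpm}(ii) states precisely that the barriers are strict super- and subsolutions, $\Phi_v(w^+)>0$ and $\Phi_v(w^-)<0$ at every interior vertex.

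The decisive step is the comparison of $u^\eps$ against the barriers. Suppose $u^\eps(v)=w^+(v)$ at some interior vertex $v$; since $u^\eps\le w^+$ throughout $W^\eps$ with equality on the boundary, every neighbor satisfies $u^\eps(w)\le w^+(w)$. I would compare $\Phi_v$ at the two functions along the segment $u_s=w^++s(u^\eps-w^+)$, $s\in[0,1]$: in $\frac{d}{ds}\Phi_v(u_s)=\sum_w \big(\partial^2E/\partial u(v)\,\partial u(w)\big)(u_s)\,(u^\eps(w)-w^+(w))$ the term $w=v$ vanishes because the center value agrees, the non-neighbor terms vanish, and each neighbor term is the product of the mixed second derivative $\partial^2E/\partial u(v)\,\partial u(w)$ with $u^\eps(w)-w^+(w)\le0$. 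The key monotonicity property of $\theta$ — namely that these mixed derivatives are nonpositive for neighbors $w\sim v$ — then makes every such product nonnegative, whence $\Phi_v(u^\eps)\ge\Phi_v(w^+)>0$, contradicting $\Phi_v(u^\eps)\le0$. The mirror argument along $w^-+s(u^\eps-w^-)$ excludes $u^\eps(v)=w^-(v)$. Hence $w^-<u^\eps<w^+$ at all interior vertices, so $u^\eps$ is an interior critical point, $\nabla E(u^\eps)=0$, and convexity makes it the global minimizer with boundary values~\eqref{eqboundu} — which lies in $W^\eps$.

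The main obstacle is exactly this comparison, and it is where the strict acuteness of the lattice angles is indispensable. I would show that for $\eps$ small every triangle built from scale factors $u\in W^\eps$ stays acute (it is an $\Or(\eps)$-perturbation of a scaled copy of the original acute triangle), so that the weights entering the mixed second derivatives of $E$ are cotangents of acute angles and therefore positive, yielding the sign $\partial^2E/\partial u(v)\,\partial u(w)\le0$ for neighbors $w\sim v$. This M-matrix structure is what powers the maximum principle above; without it the comparison — and hence the containment of the minimizer in $W^\eps$ — could fail.
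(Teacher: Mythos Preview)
Your proposal is correct and follows essentially the same approach as the paper. The paper packages the argument as Lemma~\ref{lemW}: on each boundary face $\{u\in W^\eps:u(v)=w^+(v)\}$ one shows $\partial E/\partial u(v)>0$ directly by applying a monotonicity lemma for $\theta$ (which is precisely the statement that $\partial_x\theta=\tfrac12\cot\gamma>0$ for acute $\gamma$); your version reaches the same inequality by integrating the Hessian along the segment from $w^+$ to $u^\eps$ and using the equivalent fact that the off-diagonal entries $\partial^2E/\partial u(v)\,\partial u(w)$ are nonpositive (cotangent weights of acute triangles). The KKT/``gradient points inward'' contradiction is then identical.
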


\begin{corollary}
 For all $0<\eps<\eps_0$ there exists a discrete conformal PL-map on
$T^\eps_K$ whose associated scale factors satisfy the boundary 
conditions~\eqref{eqboundu}.
\end{corollary}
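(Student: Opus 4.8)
The plan is to minimise the convex functional $E$ over the compact box $W^\eps$ and then to show, by a discrete maximum principle, that the minimiser lies in the \emph{interior} of the box, where the Euler--Lagrange equations $\partial_i E=0$ --- that is,~\eqref{eqsumalpha} --- hold. First I would record existence: $E$ is continuous (indeed $C^2$) on the convex compact set $W^\eps$, since every $u\in W^\eps$ satisfies the triangle inequalities~\eqref{eqtriangineq} for $\eps$ small (as already noted, neighbouring values differ by $\Or(\eps)$). Hence $E|_{W^\eps}$ attains its minimum at some $u^\eps\in W^\eps$, where I identify $u$ with the vector of its interior values and regard $W^\eps$ as the box $\prod_i[w^-(v_i),w^+(v_i)]$. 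Because $E$ is globally convex (see the Remark after Theorem~\ref{theoEu}), it suffices to prove $w^-(v_i)<u^\eps(v_i)<w^+(v_i)$ at every interior vertex $v_i$: then $u^\eps$ is an interior minimiser with respect to the interior coordinates, so $\partial_i E(u^\eps)=0$ for all interior $i$, which is exactly~\eqref{eqsumalpha}, and by convexity $u^\eps$ is the global minimiser of $E$ subject to the boundary condition~\eqref{eqboundu}. By Proposition~\ref{lemTu2} it then yields a discrete conformal PL-map.

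The crucial structural input is the sign of the off-diagonal entries of the Hessian of $E$. Differentiating the gradient formula of Theorem~\ref{theoEu}, it is known (see~\cite{Boris}) that the Hessian of $E$ is the cotangent Laplacian of the image triangulation: for neighbouring vertices $v_i\sim v_j$ one has $\partial_i\partial_j E=-\tfrac{1}{2}(\cot\angle_1+\cot\angle_2)$, where $\angle_1,\angle_2$ are the angles opposite the edge $[v_i,v_j]$ in the two adjacent image triangles, while $\partial_i\partial_j E=0$ for non-neighbours and the diagonal entries are positive. For $u\in W^\eps$ the image triangles differ from scaled copies of the lattice triangles of $T$ only by $\Or(\eps)$, so for $\eps$ small enough they remain strictly acute (this is precisely where the hypothesis that all angles of $T$ are smaller than $\pi/2$ enters); hence $\angle_1,\angle_2<\pi/2$ and $\partial_i\partial_j E\le 0$ throughout $W^\eps$. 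Establishing this sign is the main obstacle, since the remaining steps are formal once the maximum principle is available; we only need the sign, not the exact form of the entries.

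With the sign in hand I would run the comparison argument at the constrained minimiser. The first-order optimality conditions for a convex function on a box read $\partial_i E(u^\eps)\le 0$ where the upper bound is active ($u^\eps(v_i)=w^+(v_i)$), $\partial_i E(u^\eps)\ge 0$ where the lower bound is active ($u^\eps(v_i)=w^-(v_i)$), and $\partial_i E(u^\eps)=0$ otherwise. Suppose the upper constraint is active at some interior vertex, i.e.\ $A^+=\{i:u^\eps(v_i)=w^+(v_i)\}\neq\emptyset$, and fix $i\in A^+$. Writing $\partial_i E(w^+)-\partial_i E(u^\eps)=\int_0^1\sum_k \partial_k\partial_i E(\gamma(t))\,\big(w^+(v_k)-u^\eps(v_k)\big)\,dt$ along the segment $\gamma(t)=u^\eps+t(w^+-u^\eps)\subset W^\eps$, the term $k=i$ vanishes because $w^+(v_i)=u^\eps(v_i)$, while for $k\neq i$ we have $\partial_k\partial_i E\le 0$ and $w^+(v_k)-u^\eps(v_k)\ge 0$; hence $\partial_i E(u^\eps)\ge \partial_i E(w^+)>0$, the last inequality being Lemma~\ref{lemwpm}(ii). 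This contradicts the optimality condition $\partial_i E(u^\eps)\le 0$ at an active upper bound.

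Symmetrically, if the lower constraint is active at some $i\in A^-=\{i:u^\eps(v_i)=w^-(v_i)\}$, the analogous computation along $\gamma(t)=u^\eps+t(w^--u^\eps)$ uses $w^-(v_k)-u^\eps(v_k)\le 0$ together with $\partial_k\partial_i E\le 0$ to give $\partial_i E(u^\eps)\le \partial_i E(w^-)<0$ by Lemma~\ref{lemwpm}(ii), contradicting $\partial_i E(u^\eps)\ge 0$. Therefore $A^+=A^-=\emptyset$, so $u^\eps$ lies in the interior of $W^\eps$ and satisfies $\partial_i E(u^\eps)=0$ at every interior vertex. By convexity $u^\eps$ is the global minimiser of $E$ with boundary condition~\eqref{eqboundu}, and it lies in $W^\eps$, which is the assertion of the theorem.
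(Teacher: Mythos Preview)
Your argument is correct and is, at its core, the paper's approach in different packaging. Both proofs rest on the same elementary fact: for acute triangles the partial derivatives of the angle function~\eqref{defangle} equal $\tfrac12\cot(\text{opposite angle})>0$. The paper records this as the Monotonicity Lemma (Lemma~\ref{lemMonalpha}) and uses it in Lemma~\ref{lemW} to show directly that $\partial_i E(u)>0$ for \emph{every} $u$ on the face $W_i^+=\{u\in W^\eps:u_i=w_i^+\}$ (and $<0$ on $W_i^-$), so the negative gradient points into $W^\eps$ on the whole boundary and the minimum is interior. You phrase the same sign as $\partial_i\partial_j E\le 0$ for neighbours (the cotangent-Laplacian Hessian), integrate it only along the segment from the minimiser $u^\eps$ to $w^\pm$, and combine this with the KKT inequalities at $u^\eps$. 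The trade-off: the paper's route is slightly more self-contained (it computes the needed derivative in one line rather than citing the Hessian formula from~\cite{Boris}) and yields the stronger pointwise gradient statement on $\partial W^\eps$; your formulation, on the other hand, makes the discrete maximum-principle/comparison structure explicit and would transfer verbatim to any setting where the off-diagonal Hessian sign is known.
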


The proof of Theorem~\ref{theoexist} follows from Lemma~\ref{lemW} below.  
It is based on Theorem~\ref{theoEu} and on monotonicity estimates of the 
angle function $\theta(x,y)$ defined in~\eqref{defangle}. It is only here where 
we need the assumption that all angles of
the triangular lattice $T$ are strictly smaller than $\pi/2$.

\begin{lemma}[Monotonicity lemma]\label{lemMonalpha}
 Consider the star of a vertex $v_0$ of a triangular lattice 
$T$ and its neighboring vertices $v_1,\dots, v_6,v_7=v_1$ in cyclic order.
Denote $\lambda_{0,k}:=2\log(|v_{k+1}-v_{k}|/|v_{0}-v_{k}|)$.
Assume that all triangles $\Delta(v_0,v_k,v_{k+1})$ are strictly acute angled, 
i.e.\ all angles $<\pi/2$.

Then there exists $\eta_0>0$, depending on the $\lambda$s, such that for all
$0\leq \eta_1,\dots,\eta_6$, $\eta_7=\eta_1<\eta_0$ there holds
\[\sum_{k=1}^6 \theta(\lambda_{0,k}+\eta_{k},
\lambda_{0,k+1}+\eta_{k+1})\geq \sum_{k=1}^6 \theta(\lambda_{0,k},
\lambda_{0,k+1}),\]
and for all $0\geq \eta_1,\dots,\eta_6,\eta_7=\eta_1>-\eta_0$ we have
\[\sum_{k=1}^6 \theta(\lambda_{0,k}+\eta_{k},
\lambda_{0,k+1}+\eta_{k+1})\leq \sum_{k=1}^6 \theta(\lambda_{0,k},
\lambda_{0,k+1}).\]
\end{lemma}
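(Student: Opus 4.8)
The plan is to read the two displayed inequalities as a single monotonicity statement for the angle sum and to prove it by checking that the relevant partial derivatives have the right sign. Write
\[
\Phi(\eta_1,\dots,\eta_6):=\sum_{k=1}^6\theta(\lambda_{0,k}+\eta_k,\lambda_{0,k+1}+\eta_{k+1}),
\qquad \eta_7=\eta_1 ,
\]
and interpret each summand as the angle at (the image of) $v_0$ in the triangle $\Delta(v_0,v_k,v_{k+1})$ whose edges have been rescaled by the perturbed scale factors, exactly as in~\eqref{eqsumalpha}. Thus $\Phi$ is a smooth function of $\eta$ as long as all these triangles satisfy the strict triangle inequalities, which they do on a neighbourhood of $\eta=0$. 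Both inequalities will follow once I show that $\partial\Phi/\partial\eta_j\geq 0$ for every $j$ throughout a cube $\{\max_j|\eta_j|<\eta_0\}$: one then passes from $0$ to a point of the nonnegative orthant by raising the coordinates one at a time, and monotonicity in each coordinate forces $\Phi(\eta)\geq\Phi(0)$; the symmetric path in the nonpositive orthant gives $\Phi(\eta)\leq\Phi(0)$.

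First I would compute the two partial derivatives of the angle function $\theta$. Starting from the equivalent cosine form $\cos\theta(x,y)=\tfrac12\bigl(\text{e}^{-x}+\text{e}^{-y}-1\bigr)\text{e}^{(x+y)/2}$ of~\eqref{defangle} and differentiating, together with the law of sines, yields the clean expressions
\[
\frac{\partial\theta}{\partial x}(x,y)=\tfrac12\cot\gamma,\qquad
\frac{\partial\theta}{\partial y}(x,y)=\tfrac12\cot\beta,
\]
where $\beta,\gamma$ are the two angles of the realizing triangle opposite the sides whose ratios to the opposite side are $\text{e}^{-x/2}$ and $\text{e}^{-y/2}$. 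This is the only genuine computation, and I expect the sign- and index-bookkeeping here --- keeping straight which angle is $\beta$ and which is $\gamma$ --- to be the main technical point of the proof.

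Next I would assemble $\partial\Phi/\partial\eta_j$. The variable $\eta_j$ appears in exactly two summands of $\Phi$, and by the formulas above each contributes a half-cotangent. Tracking the geometry, the two contributions are precisely $\tfrac12\cot$ of the two angles opposite the edge $[v_0,v_j]$ in the two star triangles sharing that edge, so
\[
\frac{\partial\Phi}{\partial\eta_j}=\tfrac12\bigl(\cot\varphi^{\mathrm L}_j+\cot\varphi^{\mathrm R}_j\bigr),
\]
which is exactly the cotangent-Laplacian weight of the edge $[v_0,v_j]$. The angles $\varphi^{\mathrm L}_j,\varphi^{\mathrm R}_j$ belong to the rescaled triangles and hence depend continuously on $\eta$.

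Finally I would invoke acuteness. At $\eta=0$ all scale factors coincide, so the rescaled triangles are similar to the original triangles $\Delta(v_0,v_k,v_{k+1})$ and therefore strictly acute; in particular every $\varphi^{\mathrm L}_j,\varphi^{\mathrm R}_j$ is $<\pi/2$ and has positive cotangent. Since strict acuteness is an open condition, there is an $\eta_0>0$ depending only on the original side ratios (the $\lambda_{0,k}$) such that all these angles stay below $\pi/2$ on the cube $\{\max_j|\eta_j|<\eta_0\}$; hence $\partial\Phi/\partial\eta_j>0$ there for each $j$, and the coordinatewise monotonicity argument above yields both inequalities. This is the unique place where strict acuteness is used --- matching the remark preceding the lemma --- the point being that acuteness makes the sign of each weight robust under the perturbation $\eta$, whereas the weaker Delaunay condition only guarantees $\varphi^{\mathrm L}_j+\varphi^{\mathrm R}_j\leq\pi$ and hence $\partial\Phi/\partial\eta_j\geq 0$ without room to spare.
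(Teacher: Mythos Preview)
Your proposal is correct and follows essentially the same approach as the paper: compute the partial derivatives of $\theta$ as half-cotangents of triangle angles, use strict acuteness to ensure these are positive, and conclude monotonicity. The paper compresses your assembly and coordinatewise-integration steps into a single phrase (``by Taylor expansion''), but your more explicit version --- in particular the identification of $\partial\Phi/\partial\eta_j$ with the cotangent-Laplacian edge weight and the remark on why the Delaunay condition alone would not suffice --- makes the role of acuteness clearer.
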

\begin{proof}
First, consider a single acute angled triangle.
 Observe that with the notation of Figure~\ref{figTriang}:
\[ \frac{\partial \beta}{\partial a} = -\frac{1}{a}\cot\gamma.\]
Thus, we easily deduce that
 \[\left. \frac{\partial}{\partial \eps}
\theta(2\log(\frac{a}{c})+\eps,2\log(\frac{a}{b}))\right|_{\eps=0}= \frac{1}{2} 
\cot\gamma.\]
Now the claim follows by Taylor expansion.
\hfill{} \qed
\end{proof}

\begin{lemma}\label{lemW}
 There is an $\eps_0>0$
such that for all $0<\eps<\eps_0$ the negative gradient $-\text{grad}(E)$ on the
boundary of $W^\eps$ points into the interior of $W^\eps$.
\end{lemma}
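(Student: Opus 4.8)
The plan is to reduce the whole statement to the sign of a single partial derivative $\partial E/\partial u_i$ on each facet of the box $W^\eps$, using the gradient formula of Theorem~\ref{theoEu} together with the one-sided angle-sum inequalities for $w^\pm$ from Lemma~\ref{lemwpm}(ii). Recall that $W^\eps$ is an axis-parallel box in $\R^{V^\eps_K}$: the boundary coordinates are frozen at $\log|f'|$, and each interior coordinate $u(v_i)$ ranges over $[w^-(v_i),w^+(v_i)]$. Hence a facet of $W^\eps$ is obtained by pinning one interior coordinate $u(v_i)$ to either $w^+(v_i)$ (an ``upper'' facet) or $w^-(v_i)$ (a ``lower'' facet), the outward normal there being $\pm e_i$. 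By Theorem~\ref{theoEu} the component $\partial E/\partial u_i(u)=2\pi-\Theta_i(u)$ depends only on $u(v_i)$ and on the values of $u$ at the six neighbours of $v_i$, where $\Theta_i(u)$ denotes the angle sum around $v_i$. It therefore suffices to show $\partial E/\partial u_i(u)>0$ on every upper facet and $\partial E/\partial u_i(u)<0$ on every lower facet: these say precisely that the $i$-th component of $-\text{grad}(E)$ points strictly inward, and since distinct active facets pin distinct coordinates, establishing the sign coordinate by coordinate simultaneously handles the lower-dimensional faces (edges and corners) of the box.

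The engine is the monotonicity of the angle function: for a strictly acute triangle one has $\partial\theta/\partial x=\tfrac12\cot\gamma>0$ in each slot, which is exactly the computation carried out in the proof of Lemma~\ref{lemMonalpha}; thus $\theta$ is strictly increasing in both of its arguments. First I would fix $\eps$ small enough that every configuration $u\in W^\eps$ keeps all triangles strictly acute; this is possible because $u(v_j)-u(v_i)=\Or(\eps)$ for neighbours, so the perturbed triangles deviate only by $\Or(\eps)$ from the strictly acute triangles of $T$, and acuteness is an open condition. On an upper facet we have $u(v_i)=w^+(v_i)$ while $u(v_j)\le w^+(v_j)$ at every neighbour (with equality at boundary neighbours, where $w^+=\log|f'|$). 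Each argument of $\theta$ in $\Theta_i(u)$ has the form $\lambda+u(v_j)-u(v_i)$ with $\lambda$ a fixed lattice quantity and $u(v_j)$ entering with coefficient $+1$; raising the neighbour values from $u(v_j)$ to the larger $w^+(v_j)$, while holding $u(v_i)=w^+(v_i)$ fixed, only increases these arguments and hence, by monotonicity, only increases each summand. Therefore $\Theta_i(u)\le\Theta_i(w^+)$, and Lemma~\ref{lemwpm}(ii) gives $\Theta_i(w^+)<2\pi$, so $\partial E/\partial u_i(u)=2\pi-\Theta_i(u)>0$. The lower facets are symmetric: on $u(v_i)=w^-(v_i)$ one has $u(v_j)\ge w^-(v_j)$, whence $\Theta_i(u)\ge\Theta_i(w^-)>2\pi$ and $\partial E/\partial u_i(u)<0$.

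The comparison path between the generic facet point $u$ and the reference function $w^\pm$ stays inside $W^\eps$, since each neighbour coordinate is moved monotonically within its own interval $[w^-(v_j),w^+(v_j)]$; along this path the triangle inequalities hold and the triangles remain acute, which legitimizes the cotangent sign throughout. I expect the main obstacle to be exactly this passage from the special functions $w^\pm$, for which Lemma~\ref{lemwpm}(ii) supplies the one-sided angle-sum bounds, to an arbitrary boundary point of the box, where only the single coordinate $u(v_i)$ is pinned while the neighbouring coordinates float freely; controlling $\Theta_i$ uniformly over this freedom is precisely what the strict monotonicity of $\theta$, and hence the strict acuteness assumption on $T$, is needed for. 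Choosing $\eps_0$ as the smaller of the threshold from Lemma~\ref{lemwpm} and the acuteness threshold above then yields the claim for all $0<\eps<\eps_0$.
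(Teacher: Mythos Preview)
Your proposal is correct and follows essentially the same route as the paper's proof: reduce to the sign of $\partial E/\partial u_i$ on each facet, use that $u_i=w^\pm_i$ while $u_j\lessgtr w^\pm_j$ at the neighbours, invoke monotonicity of $\theta$ in each argument (the cotangent computation from Lemma~\ref{lemMonalpha}) to compare $\Theta_i(u)$ with $\Theta_i(w^\pm)$, and conclude via Lemma~\ref{lemwpm}(ii). Your version is arguably slightly cleaner than the paper's in one respect: you spell out that the monotonicity must hold along the whole interpolation path and justify this by keeping all triangles acute for $\eps$ small, whereas the paper cites Lemma~\ref{lemMonalpha} (stated only for perturbations off the exact lattice values $\lambda_{0,k}$) while actually applying it at the already-perturbed arguments $\lambda+u_j-w_i^+$.
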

\begin{proof}
 For notational simplicity, set $u_k=u(v_k)$, $w_k^\pm=w^\pm(v_k)$ for
vertices $v_k\in V^\eps_K$ and 
$\lambda_{a,b,c}=2\log(|v_b-v_c|/|v_a-v_b|)$.

Consider $\text{grad} (E)$ on a boundary face $W_i^+=\{u\in W^\eps :
u_i=w_i^+\}$ of the $n$-dimensional interval $W^\eps$. Let $v_1,\dots, 
v_6,v_7=v_1$ denote the neighbors of $v_i$ in cyclic order.
Note that $w_j^+-w_j^-=\eps^2(M^++M^--(C^++C^-)|v_j|^2)$ for all vertices $v_j$.
As $K$ is compact we may assume that $0<\eps_0$ is such that $w_j^+-w_j^-\leq
\eps$ for $0<\eps<\eps_0$. 
Then using the properties of $w^+$ and $u$ we obtain from
Lemma~\ref{lemMonalpha} and Lemma~\ref{lemwpm}
\begin{align*}
 \frac{\partial E}{\partial u_i}(u) &= 2\pi - \sum_{j=0}^5
\theta(\lambda_{i,{j+1},j} +\underbrace{u_j- \underbrace{u_i}_{=w_i^+}}_{\leq
w_j^+-w_i^+} ,\lambda_{i,j,{j+1}} +
\underbrace{u_{j+1} -\underbrace{u_i}_{=w_i^+}}_{\leq
w_{j+1}^+-w_i^+})\\
&\geq 2\pi - \sum_{j=0}^5 \theta(\lambda_{i,{j+1},j} +w_j^+-w_i^+,
\lambda_{i,j,{j+1}} +w_{j+1}^+-w_i^+) \\
&> 0.
\end{align*}
An analogous estimate holds for boundary faces $W_i^-$.
\hfill{} \qed
\end{proof}

We are now ready to deduce our convergence theorem.

\begin{proof}[Proof of Theorem~\ref{theoConv}]
The existence part follows from Theorem~\ref{theoexist}. The uniqueness is 
obvious as the translational and rotational freedom of the image of $f^\eps$ is 
fixed using values of $f$.

We now deduce the remaining estimates.

\smallskip
\noindent
{\it Part~(i):}
Together with the definition of $w^\pm$, Theorem~\ref{theoexist} implies 
that for $\eps>0$ small enough and all vertices $v\in V^\eps_K$
\begin{multline*}
-\eps^2(M^--C^-|v|^2) \\
\leq w^-(v) -\log|f'(v)| \leq u^\eps(v)-\log|f'(v)|\leq w^+(v) -\log|f'(v)|\\
\leq \eps^2(M^+-C^+|v|^2).
\end{multline*}
As $K$ is compact, this implies estimate~\eqref{eqconvu}.
\smallskip

\noindent
{\it Part~(ii):}
Given the scale factors $u^\eps$ associated to the discrete conformal PL-map 
$f^\eps$ on $T^\eps_K$, we can in every image triangle determine the 
interior angles (using for example~\eqref{defalpha1}). In particular, we begin 
by deducing from estimate~\eqref{eqconvu} the change of these 
interior angles of the triangles.

Recall that for acute angled triangles the center of the circumcircle lies in 
the interior of the triangle. Joining these centers for incident triangles 
leads to an embedded regular graph $\eps T^*=(\eps V^*,\eps E^*)$ which 
is dual to the given triangular lattice $\eps T$. In particular, the vertices 
$\eps V^*$ are identified with the 
centers of the circumcircles of the triangles of $\eps T$.
Furthermore, each edge $e^*\in (\eps E^*)$ intersects exactly one edge $e\in 
(\eps E)$ orthogonally, so $e$ and $e^*$ are dual, see Figure~\ref{FigDefDual}. 
\begin{figure}[t]
\begin{center}
 \input{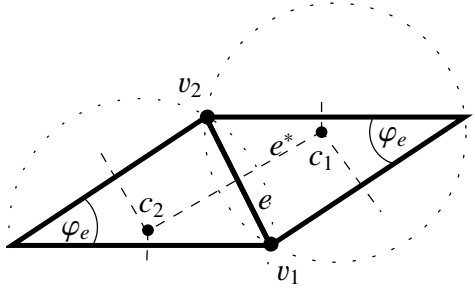}
\end{center}
\caption{Two adjacent triangles of the triangular lattice $\eps T$ and 
orthogonal edges $e\in (\eps E)$ (solid) and $e^*\in (\eps E^*)$ 
(dashed).}\label{FigDefDual}
\end{figure}
Consider an edge $e=[v_1,v_2]\in E^\eps_K$ with dual edge 
$e^*=[c_1,c_2]$. Their lengths are related by 
$|c_2-c_1|= |v_2-v_1|\cot\varphi_e$, where $\varphi_e$ denotes the angle 
opposite to $e$ in $\eps T$. Furthermore we obtain 
\begin{align}
\cot\varphi_e(\log|f'(v_2)|-\log|f'(v_1)|) 
&=\cot\varphi_e\text{Re}((\log f')'(v_1)(v_2-v_1)) +\Or(\eps^2) \notag\\
&= \cot\varphi_e\text{Im}((\log f')'(v_1)i(v_2-v_1)) +\Or(\eps^2)\notag\\
&=\text{Im}((\log f')'(v_1)(c_2-c_1)) +\Or(\eps^2) \notag\\
&=2\text{Im}((\log f')'(v_1)(c_2-v_1)) \notag\\
&\quad +2\text{Im}((\log f')'(v_1)(v_1-\textstyle\frac{c_2+c_1}{2})) 
+\Or(\eps^2) \notag\\
&=\textstyle 2\arg f'(c_2)-2\arg 
f'(\underbrace{\textstyle\frac{c_2+c_1}{2}}_{=\frac{v_2+v_1}{2}}) 
+\Or(\eps^2)
\label{eqarg1} \\
&=2\arg f'(\textstyle\frac{v_2+v_1}{2}) -\textstyle 2\arg f'(c_1)+\Or(\eps^2),
\label{eqarg2}
\end{align}
where we have chosen the notation such that $(v_2-v_1)i=(c_2-c_1)\tan\varphi_e$.

Now we estimate the change of the angles in a triangle of $T^\eps_K$ compared 
with its image triangle under $f^\eps$.
Assume given a triangle $\Delta[v_0,v_1,v_2]$ and denote $e_1=[v_0,v_1]$ and 
$e_2=[v_0,v_2]$. Denote the angle at $v_0$ by 
$\theta_0=\theta(\lambda_1,\lambda_2)$, where $l_{e_{j}}=|v_j-v_0|$ and
$\lambda_j=2\log(|v_1-v_2|/l_{e_{j+1}})$ for $j=1,2$ and 
$e_3=e_1$. 
Consider the Taylor expansion
\[\theta(\lambda_1 +x_1\eps,\lambda_2 +x_2\eps)= \theta_0 +\eps( 
\frac{\cot\varphi_{e_1}}{2}x_1 +\frac{\cot\varphi_{e_2}}{2}x_2) + \Or(\eps^2).\]
We apply this estimate for the bounded terms 
\[x_j= \frac{u^\eps(v_j)-u^\eps(v_0)}{\eps} = 
\frac{\log|f'(v_j)|-\log|f'(v_0)|}{\eps} +\Or(\eps)\] 
for $j=1,2$. 
Denote by $\delta +\theta_0\in(0,\pi)$ the angle at the image point of $v_0$ in 
the image triangle $f^\eps(\Delta[v_0,v_1,v_2])$.
Then by~\eqref{eqarg1} and~\eqref{eqarg2}
the change of angle $\delta$ is given by
\begin{equation}\label{eqdeltalpha}
 \delta = \textstyle\arg f'(\frac{v_2+v_0}{2})-\arg f'(\frac{v_0+v_1}{2}) 
+ \Or(\eps^2)
\end{equation}
This local change of angles is related to the angle $\psi^\eps(e)$ by which 
each edge $e$ of $T^\eps_K$ has to be rotated to obtain the corresponding 
image edge $f^\eps(e)$ (or, more precisely, a parallel edge). The function 
$\psi^\eps$ may be 
defined globally on $E^\eps_K$ such that in the above notation the 
change of the angle at $v_0$ is given as 
$\delta = \psi^\eps(e_2)-\psi^\eps(e_1) \in(-\pi,\pi)$.
We fix the value of $\psi^\eps$, that is the rotational freedom of 
the image of $T^\eps_K$ under $f^\eps$ at the edge $e_0$ according to 
$\arg f'$, see Theorem~\ref{theoConv}. Then we take shortest simple paths and 
deduce from~\eqref{eqdeltalpha} that each edge $e=[v_j,v_{j+1}]\in E^\eps_K$ 
is rotated counterclockwise by 
\[\psi^\eps(e)= 
\arg f'(\frac{v_j+v_{j+1}}{2}) + \Or(\eps).\] 
This implies together 
with~\eqref{eqconvu} that for all edges $e=[v_j,v_{j+1}]\in E^\eps_K$ we 
have uniformly
\begin{equation}\label{eqapprf'}
\textstyle \log f'(\frac{v_j+v_{j+1}}{2}) - 
\frac{u^\eps(v_j)+u^\eps(v_{j+1})}{2} 
-i\psi^\eps(e)=\Or(\eps).
\end{equation}
Therefore the difference of the smooth and discrete 
conformal maps at vertices $v_0\in V^\eps_K$ satisfies uniformly
\[f(v_0)-f^\eps(v_0)= O(\eps)\]
by suitable integration along shortest simple paths from the reference 
point as above. This 
estimate then also holds for all points in the support of $T^\eps_K$ and 
$\eps\to 0$.
\smallskip

\noindent
{\it Part~(iii):}
As last step we consider the derivatives of $f^\eps$ restricted to a 
triangle. 

Assume given a triangle $\Delta[v_0,v_1,v_2]$ in $T^\eps_K$. As $f^\eps$ is 
piecewise linear its restriction to $\Delta=\Delta[v_0,v_1,v_2]$ is the 
restriction of an $\R$-linear map $L_\Delta$. This map can be written for 
$z\in\C$ as 
\[L_\Delta(z)= f^\eps(v_0) + a\cdot (z-v_0) +b\cdot \overline{(z-v_0)},\] 
where the constants $a,b\in\C$ are determined from the 
conditions $L_\Delta(v_j)= f^\eps(v_j)$ for $j=0,1,2$. Straightforward 
calculation gives
\begin{align*}
 \partial_z L_\Delta &= a= \frac{(f^\eps(v_2)-f^\eps(v_0)) \overline{(v_1-v_0)} 
-(f^\eps(v_1)-f^\eps(v_0)) \overline{(v_2-v_0)}}{\overline{(v_1-v_0)}(v_2-v_0) 
-(v_1-v_0)\overline{(v_2-v_0)}} \\
\partial_{\bar z} L_\Delta &= b= \frac{(f^\eps(v_2)-f^\eps(v_0))(v_1-v_0) 
-(f^\eps(v_1)-f^\eps(v_0))(v_2-v_0)}{\overline{(v_1-v_0)}(v_2-v_0) 
-(v_1-v_0)\overline{(v_2-v_0)}}.
\end{align*}
Note that by definition of $f^\eps$ and $\psi^\eps$ we know that
\[f^\eps(v_j)-f^\eps(v_0)= (v_j-v_0)\text{e}^{(u^\eps(v_j)+u^\eps(v_0))/2 
+i\psi^\eps([v_j,v_0])},\] 
where we use the rotation function $\psi^\eps$ on the 
edges as defined in the previous part~(ii) of the proof. Now~\eqref{eqapprf'} 
together with the above expressions of $a$ and $b$ immediately implies the 
desired estimates
\[ \partial_z f^\eps|_\Delta(z)=\partial_z L_\Delta(z) =f'(z)+\Or(\eps)
 \quad \text{and}\quad
\partial_{\bar z} f^\eps|_\Delta(z)= \partial_{\bar z} L_\Delta(z)= \Or(\eps).
\]
uniformly on the triangle $\Delta=\Delta[v_0,v_1,v_2]$. Also, the constants in 
the estimate do not depend on the choice of the triangle.
 This finishes the proof.
\hfill{} \qed
\end{proof}

\begin{remark}
Theorem~\ref{theoConv} focuses on a particular way to 
approximate a given conformal map $f$ by a sequence of discrete conformal 
PL-maps. 
Namely, we consider corresponding smooth and discrete Dirichlet boundary value 
problems and compare the solutions.
 There is of course a corresponding problem for Neumann boundary conditions, 
i.e.\ prescribing angle sums of the triangles at boundary vertices using $\arg 
f'$. Also, there is a corresponding 
variational description for conformally equivalent triangle meshes or discrete 
conformal PL-maps in terms of angles, see~\cite{Boris}. 
But unfortunately, the presented methods for a convergence proof seem not to 
generalize in a straightforward manner to this case, as the order of the 
corresponding Taylor expansion is lower.
\end{remark}
\index{Convergence!triangular lattice|)}

\section*{Acknowledgement}
This research was supported by the DFG Collaborative Research Center TRR 109, 
``Discretization in Geometry and Dynamics''. 

\bibliographystyle{spmpsci}      
\bibliography{paperconv}

\end{document}